\documentclass[a4paper,reqno,11pt]{amsart}

\usepackage{amsmath,amssymb,amsthm,booktabs}
\usepackage{mathrsfs}
\usepackage{amsmath,amssymb,amsthm,enumerate,framed,graphicx,array,color,multirow,mathrsfs,amsrefs}

\usepackage{enumitem}
\setlist[itemize]{parsep=2.5pt}
\setlist[enumerate]{parsep=2.5pt}

\usepackage[utf8]{inputenc}
\usepackage[colorlinks,citecolor=blue,urlcolor=blue]{hyperref}

\usepackage{bm}
\usepackage{euscript}
\usepackage{graphicx}
\usepackage{comment}
\usepackage{multicol}
\usepackage[usenames,dvipsnames,svgnames,table]{xcolor}

\usepackage{a4wide} 

\usepackage{ulem}

\usepackage{mathrsfs}
\usepackage{euscript}

\numberwithin{equation}{section} \theoremstyle{plain}
\newtheorem{theorem}{Theorem}[section]

\newtheorem{corollary}[theorem]{Corollary}

\newtheorem{definition}[theorem]{Definition}

\newtheorem{hypothesis}[theorem]{Hypothesis}
\newtheorem{lemma}[theorem]{Lemma}
\newtheorem{notation}[theorem]{Notation}

\newtheorem{proposition}[theorem]{Proposition}

\theoremstyle{remark}
\newtheorem{remark}[theorem]{Remark}

\theoremstyle{remark}

\allowdisplaybreaks

\allowdisplaybreaks[4]
\numberwithin{equation}{section}

\def\Z{{\Bbb Z}}

\def\R{\mathbb{R}}
\def\E{\mathbb{E}}

\def\cH{\mathcal{H}}

\def\cF{\mathcal{F}}

\def\wh{\widehat}
\def\red{\color{red}}

\def\1{\mathbf 1}

\def\cal{\mathcal }

\begin{document}

\title
[Hyperbolic Anderson model]{Solving the hyperbolic Anderson model 1:\\ Skorohod setting}


\author[X. Chen]{Xia Chen}
\address{Department of Mathematics, University of Tennessee,   Knoxville}
\email{xchen@math.utk.edu}

\author[A. Deya]{Aur\'elien Deya}
\address{Institut Elie Cartan, University of Lorraine}
\email{Aurelien.Deya@univ-lorraine.fr}

\author[J. Song]{Jian Song}
\address{Research Center for Mathematics and Interdisciplinary Sciences, Shandong University; 
 School of Mathematics, Shandong University}
\email{txjsong@sdu.edu.cn}

\author[S. Tindel]{Samy Tindel}
\address{      Department of Mathematics, 
Purdue University}
\email{stindel@purdue.edu}

\subjclass[2010]{60H15,~60H07,~60F10}

\keywords{Stochastic wave equation, Malliavin calculus, Skorohod equation}

\date{}

\begin{abstract}
This paper is concerned with a wave equation   in dimension $d\in \{1,2, 3\}$, with a multiplicative space-time Gaussian noise which is fractional in time and homogeneous in space.  We provide necessary and sufficient  conditions on the space-time covariance of the Gaussian noise, allowing the existence and uniqueness of a mild Skorohod solution.  
\end{abstract}

\maketitle

\tableofcontents
\section{Introduction}

In the series of articles \cite{CDOT1,CDOT2}, we started a line of research aiming at a comparative study between the Skorohod and Stratonovich settings for the parabolic Anderson model in very rough environments. At the core of our project in the aforementioned papers lies the following observation: while the Stratonovich solution might be seen as more physically relevant, the Skorohod solution often offers more possibilities in terms of quantitative analysis (moments, asymptotics, see for instance~\cite{Balan12,chen19,hhnt}). In~\cite{CDOT1,CDOT2}, we were thus able to transfer some nontrivial information about moments of the stochastic heat equation from the Skorohod to the Stratonovich equation. 

The current article can be seen as a new chapter in this global picture. Indeed, the stochastic wave equation is another canonical model of random evolution which deserves a thorough quantitative study, just like for the heat equation. In addition, the toolbox allowing to handle basic issues for the wave equation is necessarily different in nature from the parabolic case. It thus seems natural to explore connections between the Stratonovich and the Skorohod  worlds in a hyperbolic setting. We start this long term program here by an in-depth study of existence-uniqueness results in the Skorohod realm.

To be more specific, consider the following stochastic wave equation on $\R^d$ with $d$ in the set $\{1,2,3\}$,
\begin{equation}\label{e:swe}
\begin{cases}\dfrac{\partial^2 u}{\partial t^2}(t,x)= \Delta u(t,x)+u\, \dot W(t,x), ~~ t>0, x\in \R^d,\vspace{0.2cm} \\
u(0,x)=u_0(x), \quad
\dfrac{\partial u}{\partial t}(0,x)=u_1(x).
\end{cases}
\end{equation}
In equation \eqref{e:swe}, $\Delta$ stands for the usual Laplace operator in $\R^d$, and $u_0, u_1$ are initial conditions satisfying some appropriate upper bounds (see Hypotheses \eqref{e:con-u1}-\eqref{e:con-u0} below). As far as the forcing $\dot W$ above is concerned, we consider a centered Gaussian noise whose covariance is given by 
\begin{equation}\label{e:cov}
\E[\dot W(s,x) \dot W(t,y)] =  |s-t|^{-\alpha_0}\gamma(x-y).
\end{equation}
The parameter $\alpha_0$  in \eqref{e:cov} is an arbitrary number in $[0,1)$, which means that the noise $\dot W$ we consider here is either fractional in time ($\alpha_0\in(0,1)$) or independent of time ($\alpha_0=0$).  The spatial covariance of $\dot W$ is encoded by a (possibly singular) non-negative and non-negative definite function $\gamma$ whose  spectral measure (i.e. the Fourier transform of $\gamma$) is denoted by  $\mu$.  For instance if the noise $\dot W$  is white in space, its spatial covariance function $\gamma(x)$ is the Dirac delta function $\delta(x)$ with $\mu(d\xi) =d\xi$. Another example of interest for this class of functions is the Riesz kernel $\gamma(x)=|x|^{-\alpha}$ with 
$\alpha \in (0,d)$, for which we have $\mu(d\xi) =C_\alpha|\xi|^{\alpha-d}d\xi$.

With this set of assumptions in hand, we can state our main result in a slightly informal way (see Proposition \ref{prop:necessary} and Theorem \ref{thm:sufficiency} for more rigorous versions). 

\begin{theorem}\label{thm:main-result}
Let $\dot W$ be a Gaussian noise with covariance function given by \eqref{e:cov}, where $\alpha_0\in[0,1)$ and $\gamma$ admits a spectral measure $\mu$. Then under appropriate regularity conditions on $u_0$ and $u_1$, 
\begin{equation}\label{e:condition}
  \int_{\R^d}\Big({1\over 1+\vert\xi\vert^2}\Big)^{3-\alpha_0\over 2}\mu(d\xi)<\infty,
\end{equation}
is a necessary and sufficient condition on $\mu$ so that equation \eqref{e:swe} admits a unique Skorohod solution when $\alpha_0\in(0,1)$, and a sufficient condition when $\alpha_0=0$.
\end{theorem}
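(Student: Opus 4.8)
The overall plan is to reduce existence and uniqueness to a summability criterion for the Wiener chaos expansion of the solution, and then to match that criterion with \eqref{e:condition}. Writing the mild formulation of \eqref{e:swe} as $u(t,x)=w(t,x)+\int_0^t\int_{\R^d}G_{t-s}(x-y)\,u(s,y)\,W(\delta s,\delta y)$, where $w$ solves the deterministic wave equation with data $u_0,u_1$ and $G$ denotes the wave propagator, one iterates the Duhamel formula to obtain a (formal) expansion $u(t,x)=\sum_{n\ge0}I_n(f_n(\cdot,t,x))$, in which $f_n$ is the symmetrization of a kernel supported on the time simplex $\{0<s_1<\dots<s_n<t\}$ and built from a product of propagators $G_{s_{j+1}-s_j}$ along consecutive points. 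A mild Skorohod solution in $L^2(\Omega)$ exists and is unique if and only if
\begin{equation*}
\sum_{n\ge0}n!\,\|f_n(\cdot,t,x)\|_{\cH^{\otimes n}}^2<\infty,
\end{equation*}
uniqueness being automatic once the kernels are shown to be forced by the Duhamel recursion. Since $G$ is a genuine distribution for $d\ge2$, all norms are computed on the Fourier side, where $\widehat{G_t}(\xi)=\sin(t|\xi|)/|\xi|$; this is what confines the analysis to $d\in\{1,2,3\}$.

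For the \emph{necessity} direction (case $\alpha_0\in(0,1)$) I would extract the first chaos $I_1(f_1)$, with $f_1(s,y,t,x)=G_{t-s}(x-y)w(s,y)$, and observe that its finiteness is implied by that of the full series. Passing to Fourier and using that the inner product on $\cH$ couples time variables through $|s-r|^{-\alpha_0}$ and space variables through $\mu$, the norm $\|f_1(\cdot,t,x)\|_{\cH}^2$ is comparable, for non-degenerate initial data, to
\begin{equation*}
\int_{\R^d}\Big(\int_0^t\!\!\int_0^t|s-r|^{-\alpha_0}\frac{\sin((t-s)|\xi|)\,\sin((t-r)|\xi|)}{|\xi|^2}\,ds\,dr\Big)\mu(d\xi).
\end{equation*}
A scaling change of variables $s,r\mapsto s/|\xi|,r/|\xi|$ turns the bracket into $|\xi|^{-(4-\alpha_0)}\int_0^{t|\xi|}\!\int_0^{t|\xi|}|a-b|^{-\alpha_0}\sin a\,\sin b\,da\,db$, and the two-dimensional integral grows linearly in $t|\xi|$ because $\int_\R|\tau|^{-\alpha_0}\cos\tau\,d\tau$ converges for $\alpha_0\in(0,1)$. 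Hence the bracket is of exact order $|\xi|^{-(3-\alpha_0)}\asymp(1+|\xi|^2)^{-(3-\alpha_0)/2}$ at infinity and is bounded near the origin, so finiteness forces \eqref{e:condition}. This computation also explains why $\alpha_0=0$ is excluded from the necessity claim: there the oscillatory integral no longer grows linearly (indeed $\int_0^L\sin a\,da$ stays bounded), the first-chaos term decays like $|\xi|^{-4}$, and the first moment alone yields only the strictly weaker condition with exponent $2$.

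The \emph{sufficiency} direction is where the real work lies. The plan is first to establish the sharp one-step bound
\begin{equation*}
\int_0^t\!\!\int_0^t|s-r|^{-\alpha_0}\frac{\sin(s|\xi|)\,\sin(r|\xi|)}{|\xi|^2}\,ds\,dr\;\lesssim\;C_t\,\Big(\frac{1}{1+|\xi|^2}\Big)^{\frac{3-\alpha_0}{2}},\qquad \xi\in\R^d,
\end{equation*}
valid for every $\alpha_0\in[0,1)$ (and not tight when $\alpha_0=0$, which is exactly why \eqref{e:condition} is then only sufficient). Then, after expressing $\|f_n\|_{\cH^{\otimes n}}^2$ in Fourier variables $\xi_1,\dots,\xi_n$ — so that the $j$-th propagator carries the partial-sum frequency $\xi_1+\dots+\xi_j$ — one applies the one-step bound gap by gap along the simplex, producing a factor $\prod_{j}(1+|\xi_1+\dots+\xi_j|^2)^{-(3-\alpha_0)/2}$ to be integrated against $\mu^{\otimes n}$, together with a purely temporal simplex integral. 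A change of variables reducing to partial sums and an iterated use of \eqref{e:condition} then bound the spatial integral by $A^n$ with $A=\int_{\R^d}(1+|\xi|^2)^{-(3-\alpha_0)/2}\mu(d\xi)<\infty$, while the time-ordering furnishes the factorial decay needed to absorb the prefactor $n!$ and render the series summable for all $t$.

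I expect the \textbf{main obstacle} to be precisely this last balancing act. The fractional kernel $|s-r|^{-\alpha_0}$ couples the two copies of the time variables appearing in $\|f_n\|_{\cH^{\otimes n}}^2$, so the $n$ temporal gaps do not integrate independently and the clean factorial $1/n!$ of the white-in-time case is not immediately available; moreover, the symmetrization of $f_n$ generates $n!$ cross terms indexed by permutations, each pairing differently ordered simplices. Carrying out the decoupling — by an interpolation/H\"older argument on the temporal variables that preserves the sharp spatial exponent $(3-\alpha_0)/2$, while simultaneously controlling the permutation sum so that the surviving factorial growth still beats $n!$ for every $\alpha_0<1$ — is the delicate point on which the whole sufficiency argument hinges.
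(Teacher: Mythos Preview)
Your necessity argument is in the same spirit as the paper's: both extract the first chaos and analyse the large-$|\xi|$ behaviour of the double time integral. The technical execution differs (the paper writes $|s-r|^{-\alpha_0}$ as a Fourier integral in an auxiliary variable $\lambda$ and bounds a specific region, rather than scaling), but this part of your plan is sound.

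The sufficiency plan, however, has a genuine gap. Your scheme is to prove a sharp ``one-step'' bound on the double time integral for a single propagator and then apply it gap by gap along the simplex. But in $\|f_n\|_{\cH^{\otimes n}}^2$ there are \emph{two} ordered simplices, one in $\mathbf{s}$ and one in $\mathbf{s}'$, and the fractional kernel couples them through $\prod_k|s_k-s_k'|^{-\alpha_0}$; the one-step bound acts on pairs $(s,r)$ within a single gap, not on the cross-coupled pairs $(s_k,s_k')$ that actually appear. You acknowledge this obstacle, but the proposed remedy --- ``an interpolation/H\"older argument that preserves the sharp exponent while controlling the permutation sum'' --- is not a mechanism, and there is no indication such an argument exists with the right factorial gain. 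Equally, your claim that ``a change of variables reducing to partial sums and an iterated use of \eqref{e:condition}'' yields $A^n$ for the spatial integral is not justified: the measure $\mu^{\otimes n}$ does not factor through the partial sums $\eta_j=\xi_1+\dots+\xi_j$, and a further positivity argument (both the integrand and the measure must have non-negative Fourier transforms) is needed to shift each $\eta_j$ back to the origin.

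The paper's route is entirely different and bypasses both difficulties. Instead of attacking $\|f_n(\cdot,t,x)\|_{\cH^{\otimes n}}^2$ directly, it bounds its Laplace transform $\int_0^\infty e^{-2pt}\|f_n(\cdot,t,x)\|_{\cH^{\otimes n}}^2\,dt$. The key decoupling trick is probabilistic: writing $e^{-2pt}$ as the density of $\tau\wedge\bar\tau$ with $\tau,\bar\tau$ i.i.d.\ exponential$(p)$ and using that $F_n(t,\bar t)$ is monotone in each argument, one replaces the diagonal $F_n(t,t)$ by the product $F_n(\tau,\bar\tau)$, which separates the two simplices. After this, the Laplace transform factorises exactly into $n$ identical integrals over $\R^{d+1}$, each of which is explicitly computable via $\int_0^\infty e^{-\beta r}\sin(r|\eta|)/|\eta|\,dr=(\beta^2+|\eta|^2)^{-1}$. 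The spatial shift from partial sums to single variables is handled by a Fourier maximum principle (Lemma~\ref{lem:mp}): if $f$ and $\nu$ both have non-negative Fourier transforms, then $\int f(\xi-\eta)\,\nu(d\xi)\le\int f(\xi)\,\nu(d\xi)$. Finally, rather than extracting a $1/n!$, the paper takes $p=n$, proves that the resulting one-step integral $L_{\alpha_0,n}$ satisfies $nL_{\alpha_0,n}\to 0$ under \eqref{e:condition}, and recovers the pointwise value of $\|f_n\|^2$ from its Laplace transform by monotonicity. None of these three ideas --- Poissonization for time decoupling, the Fourier maximum principle for spatial factorisation, and the $p=n$ trick replacing factorial decay --- appears in your outline.
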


 \begin{remark}
   We would like to stress that our condition~\eqref{e:condition} encompasses a wide variety of spatial covariance functions for the Gaussian noise $\dot W$, besides the typical examples such as the above-mentioned Dirac delta function and Riesz kernels. For instance, one may consider a periodic
   spatial covariance function $\gamma(x)$ for which $\mu$ becomes a discrete measure supported
   on the lattice space $a\Z^d$ for some constant $a>0$.
   In fact, one of the appeal of Theorem \ref{thm:main-result} is the generality of our framework.
\end{remark}

If the Gaussian noise $\dot W$ possesses  spatial homogeneity properties, then Theorem \ref{thm:main-result} generates the following convenient wellposedness criterion (see Section \ref{subsec:proof-coro} for a proof of this corollary).

\begin{corollary}\label{cor:main-thm}
Assume that the spatial covariance $\gamma$ is non-negative, non-negative definite, and that there exists $\alpha>0$ for which
\begin{equation}\label{intro-alpha}
\gamma(cx)=c^{-\alpha}\gamma(x) \quad \text{for all} \ c>0, \ x\in \R^d.
\end{equation}
Then the condition \eqref{e:condition} holds true (or equivalently, the wave equation \eqref{e:swe} has a unique Skorohod solution) if and only if $\alpha_0+\alpha<3$. 
\end{corollary}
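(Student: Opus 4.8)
The plan is to reduce condition~\eqref{e:condition} to a convergent geometric series whose ratio is governed by $\alpha+\alpha_0$, by first transferring the scaling hypothesis~\eqref{intro-alpha} from $\gamma$ to its spectral measure $\mu$. The homogeneity of $\gamma$ does not change the weight $(1+|\xi|^2)^{-(3-\alpha_0)/2}$ appearing in~\eqref{e:condition}, so the entire question is about how much mass $\mu$ puts at high frequencies, and this is precisely what a scaling relation for $\mu$ controls.

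First I would establish that $\gamma(cx)=c^{-\alpha}\gamma(x)$ forces the scaling relation $\mu(cA)=c^{\alpha}\mu(A)$ for every $c>0$ and every Borel set $A\subset\R^d$. To see this, write $\gamma$ as the distributional Fourier transform of $\mu$; the dilation $x\mapsto cx$ applied to $\gamma$ corresponds, on the Fourier side, to the pushforward of $\mu$ under $\xi\mapsto c\xi$, and comparing this with $c^{-\alpha}\gamma$ together with uniqueness of the Fourier transform yields the claimed scaling of $\mu$. Since $\gamma$ is non-negative and non-negative definite, Bochner--Schwartz guarantees that $\mu$ is a genuine non-negative tempered (hence locally finite) measure, so this manipulation is legitimate. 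In particular, taking $A=\{0\}$ and letting $c$ vary shows $\mu(\{0\})=0$.

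Next I would decompose $\R^d$ into dyadic annuli $C_k=\{2^k\le|\xi|<2^{k+1}\}$, $k\in\Z$, and set $m=\mu(C_0)$, which satisfies $0<m<\infty$ for any nontrivial $\gamma$. The scaling relation gives $\mu(C_k)=2^{k\alpha}m$. On each annulus the weight $(1+|\xi|^2)^{-(3-\alpha_0)/2}$ is comparable to $2^{-k(3-\alpha_0)}$ when $k\ge 0$, and is bounded between two positive constants when $k<0$. Splitting the integral in~\eqref{e:condition} accordingly, the low-frequency part $\int_{|\xi|<1}$ is comparable to $m\sum_{k<0}2^{k\alpha}$, a geometric series that converges precisely because $\alpha>0$, so this contribution is always finite under the hypothesis. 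The high-frequency part $\int_{|\xi|\ge 1}$ is comparable to $m\sum_{k\ge 0}2^{k(\alpha+\alpha_0-3)}$, a geometric series with ratio $2^{\alpha+\alpha_0-3}$.

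Finally I would read off the criterion: the high-frequency series converges if and only if $\alpha+\alpha_0-3<0$, and combined with the automatic finiteness of the low-frequency contribution this shows that~\eqref{e:condition} holds if and only if $\alpha_0+\alpha<3$, which by Theorem~\ref{thm:main-result} is equivalent to wellposedness of~\eqref{e:swe}. The only genuinely delicate step is the first one: making the passage of the scaling from $\gamma$ to $\mu$ rigorous at the level of measures, rather than densities, which requires care with the distributional Fourier transform and with the fact that $\mu$ need not be absolutely continuous. The remaining estimates are routine annular comparisons.
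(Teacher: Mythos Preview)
Your argument is correct. Both your proof and the paper's rest on the same key observation---that the $\alpha$-homogeneity of $\gamma$ transfers to the scaling relation $\mu(cA)=c^\alpha\mu(A)$ for the spectral measure---and then reduce the question to the convergence of a one-parameter radial integral. The execution differs: the paper records the scaling as $\mu(\mathcal{B}(0,r))=r^\alpha\mu(\mathcal{B}(0,1))$ and invokes a short Riemann--Stieltjes lemma (Lemma~\ref{lem:spherical}) to obtain the exact identity
\[
\int_{\R^d}\Big(\frac{1}{1+|\xi|^2}\Big)^{\frac{3-\alpha_0}{2}}\mu(d\xi)=\alpha\,\mu(\mathcal{B}(0,1))\int_0^\infty r^{\alpha-1}\Big(\frac{1}{1+r^2}\Big)^{\frac{3-\alpha_0}{2}}\,dr,
\]
whose convergence at $0$ uses $\alpha>0$ and at $\infty$ gives precisely $\alpha+\alpha_0<3$. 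Your dyadic annulus decomposition trades this identity for two-sided geometric-series bounds, which is slightly more elementary (no auxiliary lemma needed) at the cost of losing the exact constant. The two routes are standard equivalents; your note that $\mu(\{0\})=0$ and that $m=\mu(C_0)>0$ for nontrivial $\gamma$ correctly handles the edge cases the paper leaves implicit.
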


Let us complete the above statement with a few remarks.

\smallskip

\begin{remark}\label{rem:four-transf-homo}
  It is readily checked that the spectral measure of a homogeneous measure of order $\nu\in \R$ is a homogeneous measure of order $d-\nu$. Therefore, our non-negative definiteness condition on $\gamma$ actually rules out the case of $\alpha$-homogeneity with $\alpha>d$, since a homogeneous measure of negative order is identically zero (this can be seen by letting $c$ tends to $0$ in relation~\eqref{intro-alpha}).
  As the only homogeneous measures of order 0 on $\R^d$ are the constant multiples of the Lebesgue meaure,
  the only case with $\alpha=d$ is when $\gamma(\cdot)$ is a constant multiple of Dirac function
  (i.e., $\dot{W}$ is a spatial white noise).
In particular, as $(\alpha_0,\alpha)\in[0,1)\times (0,d]$, the condition $\alpha_0+\alpha<3$ is automatically verified for $d=1,2$.
\end{remark}

\smallskip

\begin{remark}\label{rmk:examples-cor}
Our Corollary~\ref{cor:main-thm} encompasses the Riesz kernel case $\gamma(x)=|x|^{-\alpha}$ with $\alpha\in(0,d)$, which obviously satisfies the homogeneous property~\eqref{intro-alpha}. Another similar example comes from fractional Brownian sheets with Hurst parameters $H_j\in(\frac12, 1)$ for $j=1, \dots, d$. In this case we have $\gamma(x) = \prod_{j=1}^d |x_j|^{-(2-2H_j)}$, and the coefficient $\alpha$ in~\eqref{intro-alpha} is given by $\alpha=\sum_{j=1}^d(2-2H_j)$.  In both the Riesz kernel and fractional sheet situation, the condition $\alpha_0+\alpha<3$ is necessary and sufficient in order to solve~\eqref{e:swe}. For the fractional Brownian sheet situation, this condition can be recast as $H_{0}+\sum_{j=1}^{d}H_{j}>d-1/2$.
\end{remark}

\smallskip

\begin{remark}
The range of application of Corollary~\ref{cor:main-thm} also includes the spatial white noise. In this case we have $\gamma(x)=\delta(x)$, and we get $\alpha=d$.  In particular one can solve equation~\eqref{e:swe} driven by a purely spatial white noise ($\alpha_{0}=0$, $\alpha=d$) in dimensions $d=1,2$ but not in dimension $d=3$. 
\end{remark}

\smallskip

Theorem \ref{thm:main-result} and Corollary~\ref{cor:main-thm} are the first results giving necessary and sufficient conditions in order to solve equation \eqref{e:swe} in the Skorohod setting for a general space-time fractional Gaussian noise. 
However, the stochastic wave equation driven by multiplicative Gaussian noise  (also known as hyperbolic Anderson model) has been extensively studied in recent years. We now briefly recall some literature related to the problem of existence and uniqueness of the mild Skorohod solution.

\begin{enumerate}
[wide, labelwidth=!, labelindent=0pt, label={(\roman*)}]
\setlength\itemsep{.05in}

\item
In \cite{walsh86}, Walsh developed an It\^o-type stochastic calculus for martingale measures and used it to study stochastic partial differential equations (SPDEs). In particular, the stochastic wave equation in dimension one was considered therein. Then adapting some results by Peszat and Zabczyk \cite{PZ} to the random field setting in \cite{dalang99}, Dalang extended Walsh's definition of stochastic integral with respect to  martingale measures. This allowed him to solve SPDEs whose Green function is a Schwartz distribution rather than a classical function. As an application,  assuming that the Gaussian noise $\dot W$ in \eqref{e:swe} is white in time (i.e., replacing the temporal covariance $|s-t|^{-\alpha_0}$ in \eqref{e:cov} by the Dirac delta function $\delta(s-t)$), the following  so-called  \textit{Dalang's condition}
\begin{equation}\label{e:dalang}
\int_{\R^d} \frac1{1+|\xi|^2} \mu(d\xi) <\infty, 
\end{equation}
was proved in \cite{dalang99} (see Remark 14 (a) therein) to be a necessary and sufficient condition for the existence and uniqueness of mild It\^o (equivalent to Skorohod in this context) solution for $d\in\{1,2,3\}$. Walsh's theory was further extended in \cite{cd08}, where stochastic wave equations in any dimension were studied. Observe that  condition \eqref{e:dalang} coincides with our own assumption~\eqref{e:condition} in the temporal white noise case which corresponds to $\alpha_0=1$.

\item
When $\dot W$ has a covariance given by \eqref{e:cov}, the Gaussian noise is colored in time and thus the stochastic calculus for martingale measures used in \cite{walsh86, dalang99, cd08} does not apply in this situation. Balan \cite{Balan12} employed Malliavin calculus (see e.g. \cite{nualart}) to obtain the existence and uniqueness of a mild Skorohod solution to \eqref{e:swe} for $d\in \{1,2,3\}$. She worked with a space-time colored noise with $\alpha_{0}\in(0,1)$ and under  Dalang's condition~\eqref{e:dalang}. This result was extended to any dimension $d$ in \cite{bs17}. Our result goes beyond the assumptions of \cite{Balan12}, since the hypothesis~\eqref{e:condition} is weaker than Dalang's condition \eqref{e:dalang} whenever $\alpha_0<1$.

\item
In the special case $d=1$, a study of the fractional space-time noise was carried out in~\cite{ssx20}. More specifically \cite{ssx20} handled the case of a fractional noise in time with index $\alpha_0\in(0,1)$, while $\gamma$ was rougher than in \cite{Balan12,bs17}. Namely in~\cite{ssx20} the spatial component of the noise is assumed to be the distributional second derivative of the function $x\mapsto |x|^{2H}$ with $H<1/2$. The condition obtained therein was $\alpha_0\in[0, 1)$ and $\alpha\in(1, 3/2)$. 
Notice that one cannot really compare our current paper with~\cite{ssx20}, since our positivity assumptions rule out the possibility of considering a very rough noise in space.  

\item
 In the recent paper \cite{BCC21}, for \eqref{e:swe} with time-independent homogeneous Gaussian noise (i.e., $\alpha_0=0$ in \eqref{e:cov} and $0<\alpha\le d$ in \eqref{intro-alpha}), the existence and uniqueness of the mild Skorohod solution was obtained under the conditions $0<\alpha< d\leq 3$ and $0< \alpha=d\le 2$ respectively. This is indeed consistent with our assumptions in Theorem \ref{thm:main-result} (see also Corollary~\ref{cor:main-thm} and the subsequent remarks). In the sequel we will highlight this point by preforming several separate computations for the specific time-independent case. It should be observed that even in the time independent case, our setting is more general than \cite{BCC21}. Indeed, our contribution encompasses cases with no density for the measure $\mu$, as well as no convolution decomposition ($\gamma = K*K$) and no homogeneity for $\gamma$.


\end{enumerate}

\noindent
As one can see from this review, our main Theorem~\ref{thm:main-result} gives a general framework allowing to solve hyperbolic Anderson models in dimension $d\le 3$. It includes and goes beyond  most of the aforementioned references. One should also mention the recent efforts \cite{De,OO} in order to properly define wave equations with additive noise and polynomial nonlinear terms, in the rough paths sense. Further comments on those contributions will be made in our forthcoming paper \cite{CDST} on Stratonovich solutions.

We now summarize the methodology employed in order to prove Theorem \ref{thm:main-result}, focusing on the sufficient condition. With respect to the heat equation situation, one of the main obstacle is that one cannot appeal to Feynman-Kac type formulae in order to analyze the equation. Therefore, we shall only rely on a proper control of the chaos expansion for a candidate solution $u$ to equation \eqref{e:swe}. As we will see in \eqref{e:u-chaos}, this chaos expansion takes the form 
\begin{equation}\label{eq:chaos}
u(t,x)= \sum_{n=0}^\infty I_n(f_n(\cdot, t,x)),
\end{equation}
for a sequence of functions $f_n$ based on products of the wave kernel. Our main task is thus reduced to a sharp control of some weighted norms of the functions $f_n$. Some important tools towards this aim are the following:
\begin{itemize}
\item Second moment computations in \eqref{eq:chaos} in terms  some $L^2$-norms of functions involving the noise covariance and the wave kernel.  

\item Poissonization (or Laplace transform) methods in order  to be reduced to $L^1$ (as opposed to $L^2$) norms and products of $1$-d integrals. 
\end{itemize}
\noindent
Some of the ingredients described above are already contained in \cite{BCC21}. However, the presence of a nontrivial time covariance induces a more technical and challenging situation. In order to proceed with the main steps described above, a delicate study based on Fourier analysis is needed.

Let us finally emphasize a striking phenomenon revealed by a close examination of the subsequent strategy and arguments. Namely, under suitable initial conditions and referring to the chaos expansion~\eqref{eq:chaos}, proving the main convergence result
$$\sum_{n=0}^\infty n! \lVert I_n(f_n(.,t,x))\rVert_{\mathcal{H}^{\otimes n}}^2<\infty, 
\quad \text{for all} \ t>0$$
is in fact equivalent to proving the (much) weaker property
$$\lVert I_1(f_1(.,t,x))\rVert_{\mathcal{H}}<\infty, \quad \text{for some}\ t>0.$$
This explains in particular how condition \eqref{e:condition} becomes necessary in the statement of Theorem~\ref{thm:main-result}.

\smallskip

This paper is organized as follows.  In Section \ref{sec:pre}, we provide some preliminaries on Gaussian analysis related to the noise $\dot W$ and equation \eqref{e:swe}. In Section \ref{sec:existence-uniqueness}, we first show that~\eqref{e:condition} is actually a necessary condition for the second moment of the first chaos of the mild Skorohod solution to be finite. This yields the necessity of condition \eqref{e:condition}. Then in Sections \ref{sec:bd-laplace}-\ref{sec:sufficiency}   we show that  under our assumption \eqref{e:condition}, the chaos expansion of the solution does converge in $L^2(\Omega)$ by estimating the Laplace transform of the second moment of  each chaos. This corresponds to the sufficient part in Theorem \ref{thm:main-result}.

Throughout the paper, we use the symbol $C$ for a generic positive constant which may be different in different places.

\section{Preliminaries}\label{sec:pre}
This section is devoted to a better grasp on  the Gaussian analysis related to the noise $\dot W$ defined by \eqref{e:cov}. Then we shall state a rigorous version of the mild formulation for equation~\eqref{e:swe}.

\subsection{The Gaussian noise and Malliavin calculus}

In this subsection we collect some preliminaries on Malliavin calculus for the Gaussian noise $\dot W$ with covariance given by \eqref{e:cov}.  For more details, we refer to \cite{nualart}. 
Let us first label our running assumptions on the noise coefficients. 
\begin{hypothesis}\label{H:noise}
Recall that the covariance function of $\dot W$ is formally given by \eqref{e:cov}. Then we assume that  $\alpha_0\in[0,1)$. Moreover, the function $\gamma$ is supposed to be non-negative and non-negative definite with spectral measure $\mu$. 
\end{hypothesis}

With Hypothesis \ref{H:noise} in mind, we  define a set of functions encoding the covariance structure of $\dot W$. Namely, let $\cH$ be the Hilbert space which is the completion of the Schwartz space $\mathcal S(\R_+\times \R)$ under the inner product
\begin{align}\label{e:inner-product}
\langle \varphi, \phi\rangle_\cH=&\int_{\R_+^2} \int_{\R^{2d}}|r-s|^{-\alpha_0}  \varphi(r,x)\phi(s,y)\gamma(x-y) \, dxdydrds.
\end{align}
One can also write this inner product in spatial Fourier mode as follows:
\begin{align}\label{e:inner-product'}
\langle \varphi, \phi\rangle_\cH=&\int_{\R_+^2} \int_{\R^d}|r-s|^{-\alpha_0}  \wh\varphi(r,\xi)
\overline{\wh\phi(s,\xi)}
\, \mu(d\xi) drds,
\end{align}
where we recall from Hypothesis \ref{H:noise} that $\alpha_0\in(0,1)$, and $\gamma$ is a non-negative definite function with spectral measure $\mu$. 

We can now introduce $W$ as an isonormal Gaussian process. Specifically, on a complete probability space $(\Omega, \cF, P)$, let $W=\{W(\varphi), \varphi\in \cH\}$ be a Gaussian family with  covariance given by
\begin{equation}\label{e:cov'}
\E[W(\varphi) W(\phi)]=\langle \varphi, \phi\rangle_\cH. 
\end{equation}
Then $W(\varphi)$ for $\varphi\in\cH$ is called the Wiener integral of $\varphi$ with respect to $W$ and we also denote $\int_{\R_+}\int_{\R^d} \varphi(t,x) W(dt, dx):=W(\varphi). $

Let $h(x_1, \dots, x_n)$ be a smooth function such that its partial derivatives have at most polynomial growth. Then for  smooth and cylindrical random variables of the form $F=h(W(\varphi_1), \dots, W(\varphi_n))$, one can define the Malliavin derivative $DF$ as the $\cH$-valued random variable 
\[ DF:=\sum_{k=1}^n\frac{\partial h}{\partial x_k}(W(\varphi_1),\dots, W(\varphi_n)) \,\varphi_k. \]
One can verify that $D:L^2(\Omega)\to L^2(\Omega; \cH)$ is a closable operator, and then we define the Sobolev space $\mathbb D^{1,2}$ as the closure of the space of the smooth and cylindrical random variables under the norm
\[\|F\|_{1,2}=\sqrt{\E[F^2]+\E[\|DF\|_\cH^2]}. \]

Denote by $\text{Dom } \delta$  the domain of the divergence operator $\delta$, which is the set of  $u\in L^2(\Omega; \cH)$ such that  $|E[\langle DF, u\rangle_\cH]|\le c_F \|F\|_2$ with some constant $c_F$ depending on $F$, for all $F\in \mathbb D^{1,2}$. The divergence operator $\delta$ (also known as the Skorohod integral) is the adjoint of the Malliavin derivative operator $D$ defined by the duality  
\begin{equation}\label{e:duality}
\E[F\, \delta(u)]=E[\langle DF, u\rangle_\cH],~ \text{ for all } F\in \mathbb D^{1,2} \text{ and } u\in \text{Dom } \delta.
\end{equation}
 Thus, for $u\in \text{Dom } \delta$, we have $\delta(u)\in L^2(\Omega)$. It is also readily checked from \eqref{e:duality} with $F\equiv1$ that $\E[\delta(u)]=0.$ Note that we will use the following notation
 \begin{equation}\label{e:skorohod-int}
 \delta(u)=\int_{\R_+}\int_\R u(t,x) W(dt,dx), 
 \quad\text{for all}\quad
 u\in \text{Dom } \delta.
 \end{equation}

To end this subsection, we recall the Wiener chaos expansion of a random variable $F\in L^2(\Omega)$. Let $\mathbf H_0=\R$, and for integers $n\ge1$, let $\mathbf H_n$ be the closed linear subspace of $L^2(\Omega)$ containing the set of random variables $\left\{H_n(W(\varphi)), \varphi\in\cH, \|\varphi\|_{\cH}=1\right\}$, where $H_n$ is the $n$-th Hermite polynomial (i.e., $H_n(x)=(-1)^ne^{x^2}\frac{d^n}{dx^n}(e^{-x^2})$).  Then $\mathbf H_n$ is called the $n$-th Wiener chaos of $W$.  Assuming $\mathcal F$ is the $\sigma$-field generated by $\{W(\varphi), \varphi\in \cH\}$, we have the following Wiener chaos decomposition
 \[
  L^2(\Omega, \mathcal F, P)=\bigoplus_{n=0}^\infty \mathbf H_n.
  \] 
For $n\ge1$, let $\cH^{\otimes n}$ be the $n$-th tensor product of $\cH$ and $\widetilde \cH^{\otimes n}$ be the symmetrization of $\cH^{\otimes n}$. Then the mapping $I_n(h^{\otimes n})=H_n(W(h))$ for $h\in \cH$ can be extended to a linear isometry between $\widetilde\cH^{\otimes n}$ and the $n$-th Wiener chaos $\mathbf H_n$. Thus, for any random variable $F\in L^2(\Omega, \mathcal F, P)$,  the following unique Wiener chaos expansion in $L^2(\Omega)$ holds true, 
 \[F=\E[F]+\sum_{n=1}^\infty I_n(f_n), \quad \text{with } f_n\in \widetilde\cH^{\otimes n}.\]
Furthermore, noting that \[\E\left[\left |I_n(f_n)\right|^2\right]=n! \|f_n\|^2_{\cH^{\otimes n}},\]
 we have 
\begin{equation}\label{e:EF2}
\E[|F|^2]=\left(\E[F]\right)^2+\sum_{n=1}^\infty\E\left[\left |I_n(f_n)\right|^2\right]=\left(\E[F]\right)^2+\sum_{n=1}^\infty n! \|f_n\|^2_{\cH^{\otimes n}}.
\end{equation}

\subsection{Mild solution for the Skorohod equation}
In this subsection, we define the mild Skorohod solution to \eqref{e:swe} and derive its  Wiener chaos expansion. We start by introducing some more notation about filtrations related to our problem.

\begin{notation}
In the sequel we write $\{\cF_t\}_{t\ge 0}$ for the filtration generated by the time increments of $\dot W$. That is we set 
\[\cF_t=\sigma\{W(\1_{[0,s]} \varphi); \,  0\le s \le t, \, \varphi\in \mathcal S(\R)\}\vee \mathcal N,\]
where $\mathcal N$ is the collection of null sets.  
\end{notation}

We now label some notation concerning the wave kernel. 

\begin{notation}
We denote by $G_t(x)$ the fundamental solution of the wave equation, and recall that we consider here the dimensions $d=1,2,3$. The generalized function $G_t$ is characterized by its spatial Fourier transform 
\begin{equation}\label{e:G-Fourier}
\wh G_t(\xi)=\frac{\sin(t|\xi|)}{|\xi|}.
\end{equation} 
For two functions $u_0, u_1$ of the variable $x$ (whose regularity will be specified below), we also set 
\begin{equation}\label{e:w}
w(t,x)=\frac{\partial}{\partial t}\left(G_t*u_0\right)(x)+\left(G_t * u_1\right)(x),
\end{equation}
where $*$ denotes the convolution in space. 
\end{notation}

The expression \eqref{e:G-Fourier} of the fundamental solution $\wh G_t$ in Fourier modes will prove to be crucial in our computations below. However, we will also resort to properties of $G_t$ in direct modes. To this aim, we recall that for $d\in\{1,2,3\}$, the fundamental solution $G_t(x)$ has explicit expressions:
\begin{equation*}
G_t(x)=\left\{\begin{array}{lll}
\displaystyle \frac12\1_{[|x|<t]}& \text{ if } d=1,\vspace{0.2cm}\\
\displaystyle\frac1{2\pi} \frac1{\sqrt{t^2-|x|^2}}\1_{[|x|<t]} & \text{ if } d=2,\vspace{0.2cm}\\
\displaystyle\frac1{4\pi t} \sigma_t & \text{ if } d=3,
\end{array} \right.
\end{equation*}
where $\sigma_t$ is the uniform measure on the sphere $\{x\in \R^3: |x|=t\}$.

With this notation in hand, we are ready to state a rigorous definition of the Skorohod solution to equation \eqref{e:swe}. 
\begin{definition}\label{def:mild-skorohod}
An $\{\cF_t\}_{t\ge 0}$-adapted random field $u=\{u(t,x), t\ge0, x\in \R^d\}$ is called a mild Skorohod solution to \eqref{e:swe} if $\E[u^2(t,x)]<\infty$ for each $(t,x)\in \R_+\times \R^d$ and if it satisfies the following integral equation,
\begin{equation}\label{e:solution}
u(t,x)=w(t,x)+\int_0^t \int_{\R^d} G_{t-s} (x-y) u(s,y) W(ds,dy), 
\end{equation}
where $w(t,x)$ is given in \eqref{e:w} and the stochastic integral on the right-hand side  is a Skorohod integral like in \eqref{e:skorohod-int}. In particular, it is implicitly assumed that for each $t\ge 0, x\in \R$, the process $v_{t,x}(s,y)=G_{t-s}(x-y) u(s,y) \1_{[0,t]}(s)$ lies in Dom $\delta$ (see \eqref{e:duality}).
\end{definition}

Let us say a few words about the chaos decomposition for the mild solution \eqref{e:solution}. First for $n\in \mathbb N$ we denote
\begin{multline}\label{e:fn}
f^w_n(s_1,x_1,\dots, s_n, x_n, t,x)
=\frac{1}{n!}\sum_{\sigma\in\Sigma_n} G_{t-s_{\sigma(n)}}(x-x_{\sigma(n)})\cdots G_{s_{\sigma(2)}-s_{\sigma(1)}}(x_{\sigma(2)}-x_{\sigma(1)})\\
\times
w(s_{\sigma(1)}, x_{\sigma(1)})\1_{[0<s_{\sigma(1)}<\dots<s_{\sigma(n)}<t]},
\end{multline}
where $\Sigma_n$ is the set of permutations on $\{1, 2,\dots, n\}$, $G_t$ is the wave kernel defined by \eqref{e:G-Fourier} and $w$ is the convolution \eqref{e:w}. Then the following result can be found, for instance, in \cite{Balan12}. 

\begin{proposition}\label{prop:chaos}
There exists a unique mild Skorohod solution to \eqref{e:swe} if and only if  the  function $w(t,x)$ given by \eqref{e:w} is well-defined and the series   $\sum_{n=1}^\infty I_n(f_n(\cdot, t, x))$ converges in $L^2(\Omega)$ for all $t>0$, i.e.,  
\begin{equation}\label{e:chaos}
\sum_{n=1}^\infty n!\|f^w_n(\cdot, t,x)\|^2_{\cH^{\otimes n}}<\infty, ~ \text{ for all } t >0 \text{ and } x\in \R^d.
\end{equation}
Whenever \eqref{e:chaos} is met, we have the following Wiener chaos expansion for the solution $u$ to equation~\eqref{e:swe}:
\begin{equation}\label{e:u-chaos}
u(t,x)=w(t,x)+\sum_{n=1}^\infty I_n(f_n^w(\cdot, t, x)).  
\end{equation}
\end{proposition}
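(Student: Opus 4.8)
The plan is to establish the equivalence by recognizing that the mild Skorohod solution, by definition, solves the integral equation~\eqref{e:solution}, and that the Skorohod integral structure forces a recursive relation on the chaos coefficients. First I would take a candidate solution $u$ with (formal) chaos expansion $u(t,x)=\sum_{n\ge 0} I_n(f_n(\cdot,t,x))$ and substitute it into the right-hand side of~\eqref{e:solution}. The key identity is that for a Skorohod integrand of the form $v_{t,x}(s,y)=G_{t-s}(x-y)u(s,y)\1_{[0,t]}(s)$, applying the divergence operator $\delta$ raises the chaos order by one: if $u(s,y)$ has $n$-th chaos coefficient $g_n(\cdot,s,y)$, then the $(n{+}1)$-th chaos coefficient of $\delta(v_{t,x})$ is the symmetrization in the $n{+}1$ time-space variables of $G_{t-s_{n+1}}(x-x_{n+1})\,g_n(s_1,x_1,\dots,s_n,x_n,s_{n+1},x_{n+1})\1_{[0,t]}(s_{n+1})$. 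Matching chaos coefficients of equal order on both sides of~\eqref{e:solution} then yields the recursion $f_0=w$ and
\begin{equation}\label{eq:recursion-plan}
f_{n+1}(s_1,x_1,\dots,s_{n+1},x_{n+1},t,x)=\widetilde{G_{t-s_{n+1}}(x-x_{n+1})\,f_n(s_1,x_1,\dots,s_n,x_n,s_{n+1},x_{n+1})\1_{[0,t]}(s_{n+1})},
\end{equation}
where the tilde denotes symmetrization. Unwinding this recursion produces exactly the closed-form expression~\eqref{e:fn} for $f_n^w$, with the sum over $\Sigma_n$ and the ordered-simplex indicator arising precisely from symmetrizing a product of wave kernels that is naturally defined on the ordered region $0<s_1<\dots<s_n<t$.

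Having identified the unique candidate coefficients $f_n^w$, I would next argue the equivalence in two directions. For the forward direction, suppose a mild Skorohod solution $u$ exists. Then $u(t,x)\in L^2(\Omega)$ for each $(t,x)$ by Definition~\ref{def:mild-skorohod}, so its Wiener chaos expansion is well-defined and its coefficients must satisfy the recursion above, hence coincide with $f_n^w$; the $L^2(\Omega)$-finiteness of $u(t,x)$ together with the isometry~\eqref{e:EF2} immediately gives $\sum_{n\ge 1} n!\,\|f_n^w(\cdot,t,x)\|_{\cH^{\otimes n}}^2<\infty$, which is~\eqref{e:chaos}; finiteness of $w(t,x)$ is the $n=0$ term. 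Conversely, if $w(t,x)$ is well-defined and~\eqref{e:chaos} holds, then by~\eqref{e:EF2} the series $w(t,x)+\sum_{n\ge 1} I_n(f_n^w(\cdot,t,x))$ converges in $L^2(\Omega)$; I would define $u(t,x)$ by this series~\eqref{e:u-chaos} and then verify that it is $\{\cF_t\}$-adapted and that the integrand $v_{t,x}$ lies in $\mathrm{Dom}\,\delta$ with $\delta(v_{t,x})=\sum_{n\ge 1} I_n(f_n^w(\cdot,t,x))$, so that~\eqref{e:solution} is satisfied. The adaptedness and the membership in $\mathrm{Dom}\,\delta$ follow from the support properties of the $f_n^w$ (they vanish unless all time arguments lie in $[0,t]$) combined with the characterization of $\mathrm{Dom}\,\delta$ as the $\cH$-valued processes whose chaos coefficients have convergent symmetrized norms—which is again guaranteed by~\eqref{e:chaos}.

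The step I expect to be the main obstacle is the rigorous justification that the Skorohod integral $\delta(v_{t,x})$ of the candidate process is well-defined and produces the claimed chaos coefficients~\eqref{e:fn}, rather than the purely formal coefficient-matching. One must show that $v_{t,x}\in\mathrm{Dom}\,\delta$, which requires controlling $\sum_n (n+1)!\,\|\widetilde{v_{t,x}^{(n)}}\|_{\cH^{\otimes(n+1)}}^2$ where $v_{t,x}^{(n)}$ is the unsymmetrized $(n{+}1)$-variable kernel; because symmetrization does not increase the $\cH^{\otimes(n+1)}$-norm, this sum is bounded by a reindexed version of the series in~\eqref{e:chaos}, so condition~\eqref{e:chaos} is exactly what makes the integrand admissible. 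A second delicate point is ensuring that the wave kernel $G_t$, which is only a distribution for $d=2,3$ rather than a genuine function, can legitimately multiply the chaos kernels inside~\eqref{e:inner-product'}; this is handled by working in spatial Fourier modes via~\eqref{e:inner-product'} and~\eqref{e:G-Fourier}, where $\wh G_t(\xi)=\sin(t|\xi|)/|\xi|$ is an honest bounded-growth function, so all the products and norms are well-defined as integrals against $\mu(d\xi)$. Since the excerpt attributes this proposition to~\cite{Balan12}, I would invoke that reference for the technical verification and present the coefficient-matching recursion as the conceptual heart of the argument.
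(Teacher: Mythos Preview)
Your proposal is correct and aligns with the paper's treatment: the paper does not give its own proof of this proposition but simply cites \cite{Balan12}, and the coefficient-matching recursion you outline is precisely the standard argument developed there. Your closing remark that you would invoke that reference for the technical details is therefore exactly what the paper does.
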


In order to state some necessary and sufficient conditions allowing to solve \eqref{e:swe}, we have to introduce another piece of notation. Namely we will call $f_n$ the function $f^w$ in \eqref{e:fn} obtained when $w\equiv 1$. More specifically we have 
\begin{align}\label{e:fn'}
f_n(s_1, x_1, \dots, s_n, x_n, t,x)
=  \frac{1}{n!}\sum_{\sigma\in\Sigma_n} & G_{t-s_{\sigma(n)}}(x-x_{\sigma(n)})\cdots G_{s_{\sigma(2)}-s_{\sigma(1)}}(x_{\sigma(2)}-x_{\sigma(1)})\notag\\
&\times\1_{[0<s_{\sigma(1)}<\dots<s_{\sigma(n)}<t]}. 
\end{align}
We can now state our bound on the function $w$. 

\begin{proposition}\label{prop:condition-u0-u1}
Let $u_0$ and  $u_1$ be the initial conditions in \eqref{e:swe}, and recall that $w$ is defined by \eqref{e:w}. Let us assume that 
\begin{equation}\label{e:con-u1}
\sup_{x\in\R^d} |u_1(x)|<\infty ,
\end{equation}
and for $u_0$ we suppose
\begin{equation}\label{e:con-u0}
\sup_{x\in\R^d}|u_0(x)|<\infty ~\text{ if }~ d=1, 
\quad\text{and}\quad  \int_{\R^d} |\wh u_0(\xi)| d\xi <\infty ~\text{ if }~ d=2,3. 
\end{equation}
Then the function $w$ satisfies
\begin{equation}\label{e:bound-w}
\sup_{t\in[0,T], x\in\R^d} |w(t,x)|<\infty.
\end{equation}
\end{proposition}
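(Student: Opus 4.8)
The plan is to decompose $w$ as in \eqref{e:w} into the two pieces
\[
w_1(t,x)=\left(G_t*u_1\right)(x)
\qquad\text{and}\qquad
w_0(t,x)=\frac{\partial}{\partial t}\left(G_t*u_0\right)(x),
\]
and to bound each of them uniformly on $[0,T]\times\R^d$. The term $w_1$ is the easy one. The key observation is that for every $d\in\{1,2,3\}$ the wave kernel $G_t$ is a \emph{non-negative} measure with finite total mass $\int_{\R^d}G_t(dy)=t$; this is read off directly from the explicit expressions recalled before the statement (integrating the indicator when $d=1$, a polar-coordinate computation when $d=2$, and the surface-area identity $\frac{1}{4\pi t}\cdot 4\pi t^2=t$ when $d=3$). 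Since $u_1$ is bounded by assumption \eqref{e:con-u1}, this gives at once $|w_1(t,x)|\le \|u_1\|_\infty\int_{\R^d}G_t(dy)=t\,\|u_1\|_\infty\le T\,\|u_1\|_\infty$, uniformly in $x$ and $t\in[0,T]$.

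For $w_0$ I would split according to the two regimes of \eqref{e:con-u0}. In dimension one the explicit form $G_t(x)=\frac12\1_{[|x|<t]}$ yields $(G_t*u_0)(x)=\frac12\int_{x-t}^{x+t}u_0(z)\,dz$, and differentiating in $t$ produces the d'Alembert expression $w_0(t,x)=\frac12\big[u_0(x+t)+u_0(x-t)\big]$, which is bounded by $\|u_0\|_\infty$ — exactly the hypothesis placed on $u_0$ when $d=1$. In dimensions two and three I would instead work on the Fourier side: from \eqref{e:G-Fourier} one has $\wh{G_t*u_0}(\xi)=\frac{\sin(t|\xi|)}{|\xi|}\,\wh u_0(\xi)$, so applying $\partial_t$ brings down a factor $\cos(t|\xi|)$ and, after inversion, gives the representation $w_0(t,x)=c_d\int_{\R^d}e^{ix\cdot\xi}\cos(t|\xi|)\,\wh u_0(\xi)\,d\xi$ for a dimensional constant $c_d$. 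Bounding $|\cos|\le 1$ then produces $|w_0(t,x)|\le c_d\int_{\R^d}|\wh u_0(\xi)|\,d\xi$, which is finite precisely by the integrability assumption in \eqref{e:con-u0} for $d=2,3$. Combining the estimates for $w_0$ and $w_1$ yields \eqref{e:bound-w}.

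The only point requiring genuine care is the $w_0$ term for $d=2,3$, where $G_t$ is a distribution (a singular kernel when $d=2$, a surface measure when $d=3$) and one must justify the Fourier manipulations rather than simply quote them. Concretely, I would first note that $G_t*u_0$ is a bona fide function given by an absolutely convergent integral, since $|\sin(t|\xi|)/|\xi||\le \min\{t,1/|\xi|\}$ and $\wh u_0\in L^1$ force $\frac{\sin(t|\xi|)}{|\xi|}\wh u_0(\xi)\in L^1$; then I would legitimize differentiation under the integral sign by observing that the differentiated integrand $e^{ix\cdot\xi}\cos(t|\xi|)\wh u_0(\xi)$ is dominated, uniformly in $t$, by the integrable function $|\wh u_0(\xi)|$, so dominated convergence applies. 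Once this justification is in place, the uniform bound on $w_0$ — and hence the full statement \eqref{e:bound-w} — follows immediately.
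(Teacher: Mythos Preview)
Your proof is correct and follows essentially the same approach as the paper: both bound $G_t*u_1$ via the total mass identity $\int G_t=t$, handle $\partial_t(G_t*u_0)$ in $d=1$ through the d'Alembert formula, and in $d=2,3$ through the Fourier representation $\int e^{ix\cdot\xi}\cos(t|\xi|)\wh u_0(\xi)\,d\xi$. Your additional care in justifying differentiation under the integral sign via dominated convergence is a nice touch that the paper leaves implicit.
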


\begin{proof}
We resort to expression \eqref{e:w} and will bound the two terms therein. 

First, in order to bound the term $G_t* u_1$ in \eqref{e:w}, note that (regardless of the dimension $d=1,2,3$) we have $\int_{\R^d} G_t(x) dx=t$ for all $t>0$. Hence if $\|u_1\|_\infty =M<\infty$, for all $x\in \R^d$ we get 
\begin{equation}\label{e:bound-Gt-u1}
|G_t * u_1(x)|\le \|u_1\|_\infty \int_{\R^d} G_t(y) dy=Mt.
\end{equation}

Regarding the term $\frac{\partial}{\partial t}(G_t*u_0)(x)$ in \eqref{e:w}, we separate the study according to the dimension.  That is for $d=1$ it is readily checked that 
\[\frac{\partial}{\partial t} (G_t*u_0)(x)=\frac12\left(u_0(x+t)+u_0(x-t)\right). \]
Therefore, if $\|u_0\|_\infty=M$, we clearly have 
\begin{equation}\label{e:bound-Gt-u0}
\left|\frac{\partial}{\partial t} (G_t*u_0)(x)\right|\le M,
\end{equation}
uniformly in $x\in \R$. For $d=2,3$, we express the convolution in Fourier modes. Namely using $\wh{G_t* u_0} =\wh G_t \wh u_0$ and appealing to expression \eqref{e:G-Fourier} for the Fourier transform of $G_t$, one can write 
\begin{equation*}
\frac{\partial }{\partial t}\left(G_t*u_0\right)(x)
= \frac{\partial }{\partial t}\left(\int_{\R^d} \frac{\sin(t|\xi|)}{|\xi|} \wh u_0(\xi) e^{i\xi\cdot x}d\xi\right)
=\int_{\R^d} \cos(t|\xi|) \wh u_0(\xi) e^{i\xi\cdot x}d\xi .
\end{equation*}
Hence if one assumes $\|\wh u_0\|_{L^1(\R^d)}=M$, we get 
\begin{equation}\label{e:bound-Gt-u0'}
\left|\frac{\partial}{\partial t} (G_t*u_0)(x)\right|\le\int_{\R^d}|\wh u_0(\xi)|d\xi=M,
\end{equation}
uniformly in $x\in\R^d$. 

We can now conclude our claim: we simply gather \eqref{e:bound-Gt-u1}, \eqref{e:bound-Gt-u0} and \eqref{e:bound-Gt-u0'} into expression \eqref{e:w}. This yields our estimate \eqref{e:bound-w}. 
\end{proof}

\section{Existence and uniqueness in the Skorohod setting}\label{sec:existence-uniqueness}

We now turn to the existence and uniqueness problem for the Skorohod equation \eqref{e:solution}, starting with the statement of our main theorem.

\begin{theorem}\label{thm:main}
Consider equation \eqref{e:solution} driven by a centered Gaussian noise $\dot W$. The covariance function of $\dot W$ is given by \eqref{e:cov}, with $\alpha_0\in(0,1)$ and $\gamma$ satisfying Hypothesis \ref{H:noise}. Then the following holds true. 
\begin{enumerate}
[wide, labelwidth=!, labelindent=0pt, label=\textnormal{(\roman*)}]
\setlength\itemsep{.05in}

\item 
If \eqref{e:con-u1} and \eqref{e:con-u0}  are satisfied, we have that equation \eqref{e:solution} admits a unique solution as soon as $\gamma$ satisfies \eqref{e:condition}, that is
\begin{equation*}
\int_{\R^d} \left(\frac1{1+|\xi|^2}\right)^{\frac{3-\alpha_0}2}\mu(d\xi)<\infty. 
\end{equation*}

\item 
Recall that $w$ is defined by \eqref{e:w}. If in addition we have $w(x) \ge a_0$ with a strictly positive constant $a_0$, then \eqref{e:condition} is also a necessary condition in order to get a unique solution to~\eqref{e:solution}. 

\end{enumerate}

\end{theorem}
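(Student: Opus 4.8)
The plan is to use Proposition~\ref{prop:chaos} to recast both halves of the statement as the question of whether $\sum_{n\ge1} n!\,\|f_n^w(\cdot,t,x)\|_{\cH^{\otimes n}}^2$ is finite, and to isolate the single scalar quantity that governs the first chaos. Writing the inner product in the Fourier form \eqref{e:inner-product'} and combining \eqref{e:fn'} with \eqref{e:G-Fourier}, a direct computation (spatial Fourier transform in the integration variable, then the change of variables $u=t-r$, $v=t-s$, under which $|r-s|=|u-v|$ and the phase $e^{-i\xi\cdot x}$ disappears after taking the modulus) gives
\[
\|f_1(\cdot,t,x)\|_{\cH}^2=\int_{\R^d}\Phi_t(\xi)\,\mu(d\xi),\qquad \Phi_t(\xi)=\frac{1}{|\xi|^2}\int_0^t\!\!\int_0^t |u-v|^{-\alpha_0}\sin(u|\xi|)\sin(v|\xi|)\,du\,dv.
\]
The central estimate I would establish is the two-sided bound $\Phi_t(\xi)\asymp (1+|\xi|^2)^{-(3-\alpha_0)/2}$, with constants depending only on $t$; the exponent $(3-\alpha_0)/2$ is precisely the one appearing in \eqref{e:condition}, so everything reduces to this master estimate and its consequences at each chaos order.

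For the necessity (ii) I would argue as follows. Since $G_t\ge0$, $\gamma\ge0$, the temporal kernel $|r-s|^{-\alpha_0}\ge0$, and by hypothesis $w\ge a_0>0$, every summand in \eqref{e:chaos} is nonnegative, whence $\sum_{n\ge1}n!\,\|f_n^w\|_{\cH^{\otimes n}}^2\ge \|f_1^w(\cdot,t,x)\|_{\cH}^2$. Moreover, because all functions entering the inner product \eqref{e:inner-product} are nonnegative and are tested against the nonnegative kernel $|r-s|^{-\alpha_0}\gamma(x-y)$, the pointwise inequality $f_1^w\ge a_0 f_1\ge0$ upgrades to $\|f_1^w\|_{\cH}^2\ge a_0^2\|f_1\|_{\cH}^2$. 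Hence existence of a solution forces $\int_{\R^d}\Phi_t(\xi)\,\mu(d\xi)<\infty$, and invoking the lower half $\Phi_t(\xi)\ge c_t(1+|\xi|^2)^{-(3-\alpha_0)/2}$ of the master estimate yields \eqref{e:condition}. This is the elegant reduction anticipated in the introduction: convergence of the whole series is forced already by the first chaos.

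For the sufficiency (i) the same estimate is the seed, but now one needs it at every chaos order. Starting from the second-moment identity \eqref{e:EF2} and the symmetrized kernels \eqref{e:fn}, I would first use Proposition~\ref{prop:condition-u0-u1} to replace the factor $w$ by its uniform bound, reducing to the $w\equiv1$ kernels $f_n$ of \eqref{e:fn'}. The resulting $n$-fold time-ordered integral couples $n$ copies of the temporal covariance with $n$ wave kernels; the plan is to apply the Poissonization/Laplace-transform device to disentangle the simplex ordering into a product of one-dimensional integrals and to pass from the $L^2(\mu^{\otimes n})$ structure to an $L^1$-type quantity, each factor of which is controlled by $\int_{\R^d}\Phi_t(\xi)\,\mu(d\xi)$ (finite under \eqref{e:condition}) up to harmless constants. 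This should produce a bound $n!\,\|f_n^w\|_{\cH^{\otimes n}}^2$ that is summable in $n$, which by Proposition~\ref{prop:chaos} gives existence and uniqueness.

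The main obstacle throughout is the master estimate itself: the time integral defining $\Phi_t$ is genuinely oscillatory and the sine product changes sign, so a crude modulus bound loses the sharp exponent. I would handle this through the Fourier representation $|u-v|^{-\alpha_0}=c_{\alpha_0}\int_{\R}e^{i\lambda(u-v)}|\lambda|^{\alpha_0-1}\,d\lambda$, which recasts the double integral as $\Phi_t(\xi)=c_{\alpha_0}|\xi|^{-2}\int_{\R}|\lambda|^{\alpha_0-1}\bigl|\int_0^t e^{i\lambda u}\sin(u|\xi|)\,du\bigr|^2\,d\lambda$, now manifestly nonnegative and suited to a stationary-phase-type analysis: for $|\xi|$ large the mass concentrates near $\lambda=\pm|\xi|$, where $|\lambda|^{\alpha_0-1}\approx|\xi|^{\alpha_0-1}$ and the remaining $L^2$ factor contributes an order-$t$ constant, giving $\Phi_t(\xi)\asymp|\xi|^{\alpha_0-3}$, while the small-$|\xi|$ regime follows from $\sin(u|\xi|)\approx u|\xi|$. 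Making the constants in this concentration argument uniform in $\xi$, and in both directions, is the delicate point for the necessity; the combinatorial bookkeeping needed to sum the per-chaos bounds is the delicate point for the sufficiency.
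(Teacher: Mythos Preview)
Your necessity argument (ii) is essentially the paper's: reduce to the first chaos via the pointwise bound $f_1^w\ge a_0 f_1$, write $|u-v|^{-\alpha_0}=c_{\alpha_0}\int_{\R}|\lambda|^{\alpha_0-1}e^{i\lambda(u-v)}\,d\lambda$ to make the time integral manifestly nonnegative, and then localize in $\lambda$ near $|\xi|$ to extract the lower bound $\Phi_t(\xi)\gtrsim |\xi|^{\alpha_0-3}$ for $|\xi|\ge 1$. The paper carries this out by keeping only the cosine part and restricting to $0\le\lambda\le|\xi|$, but the mechanism is the same stationary-phase localization you describe.

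The sufficiency argument (i) has a genuine gap. You correctly name the Poissonization/Laplace device, and this is indeed what the paper uses: one shows
\[
\int_0^\infty e^{-2pt}\|f_n(\cdot,t,x)\|_{\cH^{\otimes n}}^2\,dt\ \le\ \big(C_{\alpha_0}\,L_{\alpha_0,p}\big)^n,\qquad L_{\alpha_0,p}:=\int_{\R^{d+1}}\frac{d\tau\,\mu(d\eta)}{|\tau|^{1-\alpha_0}\,\Phi_p(\tau,\eta)},
\]
where $\Phi_p$ is as in \eqref{e:phi-n}. But for any \emph{fixed} $p$ this only yields $\|f_n(\cdot,t,x)\|_{\cH^{\otimes n}}^2\le C^n$, which does not beat the $n!$ in \eqref{e:chaos}. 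Your claim that ``each factor is controlled by $\int_{\R^d}\Phi_t(\xi)\,\mu(d\xi)$ up to harmless constants'' and that the resulting bound is summable is exactly where the argument fails: the per-factor quantity after Laplace transform depends on $p$, not on $t$, and a constant-to-the-$n$ bound is never summable against $n!$. The paper's missing ingredient is twofold. First, one lets the Laplace parameter depend on $n$, taking $p=n$, and uses monotonicity of $t\mapsto\|f_n(\cdot,t,x)\|_{\cH^{\otimes n}}^2$ to recover a pointwise bound $\|f_n(\cdot,t,x)\|_{\cH^{\otimes n}}^2\le n\,e^{2nt}(C_{\alpha_0}L_{\alpha_0,n})^n$. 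Second, and this is the heart of the matter, one proves the quantitative decay $nL_{\alpha_0,n}\to 0$ (Lemma~\ref{lem:LD}); this is a nontrivial analysis of the integral defining $L_{\alpha_0,n}$, requiring a decomposition of $(\tau,\eta)$-space into several regions, and it is precisely here that condition~\eqref{e:condition} enters. From $nL_{\alpha_0,n}\to 0$ one gets $n^n\|f_n\|_{\cH^{\otimes n}}^2\to 0$ super-exponentially, and since $n!\le n^n$ the series converges. Your proposal does not contain this $p=n$ mechanism nor the decay lemma, and without them the ``combinatorial bookkeeping'' you flag as delicate cannot close.
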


\begin{remark}
A simple example of initial condition $(u_0, u_1)$ in \eqref{e:swe} fulfilling \eqref{e:con-u1}-\eqref{e:con-u0} and giving rise to a strictly positive $w$ is $u_0\equiv 0$ and $u_1\equiv 1$. 
\end{remark}

The remainder of the section is devoted to a proof Theorem \ref{thm:main}. We start with the necessary condition. 

\subsection{On the necessary condition}\label{sec:necessary}

Recall that the covariance function of $\dot W$ is given by~\eqref{e:cov}, with a non-negative constant $\alpha_0$ and a non-negative and non-negative definite function $\gamma(x)$ whose Fourier transform is $\mu$. In Proposition \ref{prop:necessary} below, we provide a necessary condition~\eqref{e:condition} for the existence and uniqueness of the Skorohod solution to \eqref{e:swe}.

\begin{proposition}\label{prop:necessary}
Let $W$ be a noise whose covariance function is given by \eqref{e:cov}. We assume that Hypothesis \ref{H:noise} holds true with $\alpha_0\in(0,1)$ and a function $\gamma$. Assume that the function $w$ given by \eqref{e:w} is lower bounded by a constant $a_0>0$.  Then a necessary condition in order to solve equation \eqref{e:swe} is \eqref{e:condition}. 
\end{proposition}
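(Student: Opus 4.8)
The plan is to use Proposition~\ref{prop:chaos}: if a mild Skorohod solution exists, then the chaos series $\sum_{n} n!\,\|f^w_n(\cdot,t,x)\|^2_{\cH^{\otimes n}}$ converges for every $t>0$, and since all summands are non-negative, the single term $n=1$ must already be finite. It therefore suffices to show that finiteness of $\|f^w_1(\cdot,t,x)\|_\cH^2$ for one fixed $t>0$ already forces \eqref{e:condition}. This is exactly the ``first chaos controls everything'' phenomenon announced in the introduction, and the implication \emph{solution exists} $\Rightarrow$ \emph{first chaos finite} is the only (trivial) direction I need here.

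First I would write out the $n=1$ term. From \eqref{e:fn} one has $f^w_1(s_1,x_1,t,x)=G_{t-s_1}(x-x_1)\,w(s_1,x_1)\1_{[0<s_1<t]}$. Feeding this into the inner product \eqref{e:inner-product} and using the standing hypotheses $w\ge a_0>0$, $G_t\ge 0$ and $\gamma\ge 0$, I obtain
$$
\|f^w_1(\cdot,t,x)\|_\cH^2 \ \ge\ a_0^2 \int_0^t\!\!\int_0^t |r-s|^{-\alpha_0}\!\int_{\R^{2d}} G_{t-r}(x-y)\,G_{t-s}(x-z)\,\gamma(y-z)\,dy\,dz\,dr\,ds.
$$
Passing to the spatial Fourier side via \eqref{e:inner-product'}, the phases $e^{\pm i\xi\cdot x}$ cancel and $\wh G_t(\xi)=\sin(t|\xi|)/|\xi|$ is real, so after the change of variables $a=t-r$, $b=t-s$ this reduces to
$$
\|f^w_1(\cdot,t,x)\|_\cH^2 \ \ge\ a_0^2 \int_{\R^d}\frac{\Phi_t(|\xi|)}{|\xi|^2}\,\mu(d\xi),
\qquad
\Phi_t(\lambda):=\int_0^t\!\!\int_0^t |a-b|^{-\alpha_0}\sin(a\lambda)\sin(b\lambda)\,da\,db.
$$
The interchange of $\mu(d\xi)$ with the $(a,b)$-integration is legitimate once $\Phi_t(\lambda)\ge 0$ is known; this non-negativity (and hence the use of Tonelli) follows from the spectral representation of the fractional kernel, $\Phi_t(\lambda)=c_{\alpha_0}\int_\R |g(\tau)|^2|\tau|^{\alpha_0-1}\,d\tau\ge0$ with $g(\tau)=\int_0^t \sin(a\lambda)\,e^{ia\tau}\,da$.

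The heart of the argument is a lower bound on $\Phi_t$ at high frequency. Rescaling $a=u/\lambda$, $b=v/\lambda$ gives the exact identity $\Phi_t(\lambda)=\lambda^{\alpha_0-2}\,\Psi(t\lambda)$ with $\Psi(T):=\int_0^T\!\int_0^T |u-v|^{-\alpha_0}\sin u\,\sin v\,du\,dv$, so everything reduces to showing $\Psi(T)\gtrsim T$ as $T\to\infty$. I would prove this through the time-frequency representation $\Psi(T)=c_{\alpha_0}\int_\R |g_T(\tau)|^2|\tau|^{\alpha_0-1}\,d\tau$, $g_T(\tau)=\int_0^T \sin u\,e^{iu\tau}\,du$, and then isolate the contribution of $\tau$ near $1$: there $g_T$ concentrates, with $|g_T(\tau)|\sim T$ on a window $|\tau-1|\lesssim 1/T$ while the weight $|\tau|^{\alpha_0-1}$ stays bounded above and below, whence $\int_{|\tau-1|\le\delta}|g_T(\tau)|^2\,d\tau \gtrsim T$ (using $\int_\R s^{-2}(1-\cos(Ts))\,ds=\pi T$). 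I expect this oscillatory-integral lower bound to be the main obstacle, since one must cleanly separate the concentrating piece near $\tau=\pm1$ from the bounded remainder and keep the constants uniform for large $T$. It yields a threshold $R=R(t)$ with $\Phi_t(\lambda)/\lambda^2\ge c\,\lambda^{\alpha_0-3}$ for all $\lambda\ge R$.

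Finally I would assemble the pieces. Finiteness of the left-hand side above forces $\int_{|\xi|>R}|\xi|^{\alpha_0-3}\,\mu(d\xi)<\infty$, and since $|\xi|^{\alpha_0-3}\ge c'(1+|\xi|^2)^{-(3-\alpha_0)/2}$ for $|\xi|>R$, the tail part of \eqref{e:condition} is finite. On the complementary region $\{|\xi|\le R\}$ the integrand $(1+|\xi|^2)^{-(3-\alpha_0)/2}$ is bounded by $1$ and $\mu$ is locally finite (being the spectral, hence tempered, measure of $\gamma$), so that part contributes a finite amount as well. Adding the two contributions gives precisely \eqref{e:condition}, the desired necessary condition.
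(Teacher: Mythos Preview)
Your proposal is correct and follows the same overall strategy as the paper: reduce to finiteness of the first chaos $\|f_1\|_\cH^2$ via $w\ge a_0>0$, pass to spatial Fourier modes, and extract a lower bound of the form $c\int_{\{|\xi|>R\}}|\xi|^{\alpha_0-3}\,\mu(d\xi)$. The only genuine difference is in how the pointwise lower bound on the $\xi$-integrand is obtained. You use the scaling identity $\Phi_t(\lambda)=\lambda^{\alpha_0-2}\Psi(t\lambda)$ and then argue the asymptotic $\Psi(T)\gtrsim T$ through the concentration of $g_T(\tau)=\int_0^T\sin u\,e^{iu\tau}\,du$ near $\tau=\pm 1$; the paper instead keeps only the real part $Q_t(\lambda,\xi)=\int_0^t\cos(\lambda s)\sin(|\xi|s)\,ds$, evaluates it explicitly via trigonometric identities, and restricts to the region $0\le\lambda\le|\xi|$ where both resulting terms are non-negative, so that one of them already yields $|\xi|^{\alpha_0-3}$ after the change of variable $\tau=|\xi|-\lambda$. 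The paper's route is a bit more elementary (no asymptotic estimate, only explicit algebra), while your scaling argument has the virtue of making the exponent $\alpha_0-3$ appear transparently from homogeneity.
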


\begin{proof}
Recall that the function $f_1^{w}$ corresponding to the first chaos in our decomposition~\eqref{e:u-chaos} is defined by
\begin{equation}\label{e:f1}
f_1^{w}(s,y;t,x)=G_{t-s} (x-y) w(s,y)\1_{[0,t]}(s). 
\end{equation} 
Since we have assumed $w(x) \ge a_0>0$ uniformly in $x\in\R^d$, it suffices to show that \eqref{e:condition} is a necessary condition for $\|f_1(\cdot, t,x)\|^2_{\cH}$  to be finite, where we recall that we have set $f_1=f_{1}^{w}$ for the initial condition $w(s,y)\equiv 1 $. 
Furthermore, thanks to~\eqref{e:inner-product'} we have the following expression for $f_{1}$:
\begin{align*}
\|f_1(\cdot, t,x)\|_{\cH}^2=&\int_0^t\int_0^t \int_{\R^d} |r-s|^{-\alpha_0} \frac{\sin(r|\xi|)}{|\xi|}\frac{\sin(s|\xi|)}{|\xi|}\mu(d\xi) drds.
\end{align*}
Using the fact that the Fourier transform of $|t|^{-\alpha_0}$ is proportional to $|\lambda|^{\alpha_0-1}$, we thus obtain
\begin{align}
\|f_1(\cdot, t,x)\|_{\cH}^2= c_{\alpha_{0}}
\int_{\R^{d}}\int_{\R} |\lambda|^{\alpha_0-1} \left|\int_0^t e^{i\lambda s} \frac{\sin(s|\xi|)}{|\xi|} ds\right|^2 d\lambda \, \mu(d\xi). 
\label{e:f1-norm}
\end{align}

We now lower bound the square in the right-hand side of \eqref{e:f1-norm}. First,  separating real and imaginary parts, we easily get 
\begin{align}\label{e:f1-norm1}
\left|\int_0^t e^{i\lambda s} \frac{\sin(s|\xi|)}{|\xi|} ds\right|^2
\ge&\frac{1}{|\xi|^2} \Big|Q_t(\lambda, \xi)  \Big|^2, 
\end{align}
where we have set 
\[ Q_t(\lambda, \xi) = \int_0^t \cos(\lambda s)\sin(|\xi|s) ds.\]
Then notice that $Q_t(\lambda,\xi)$ is an integral which can be computed explicitly thanks to elementary methods. We get
\begin{align}\label{a0}
Q_t(\lambda, \xi) =&\frac12\int_0^t \Big( \sin\big((\lambda +|\xi|)s\big)- \sin\big((\lambda -|\xi|)s\big)\Big) ds
\notag\\
=& \frac12 \left( \frac{1-\cos\big( (\lambda+|\xi|)t \big)}{\lambda+|\xi|} +  \frac{1-\cos\big( (|\xi|-\lambda)t \big)}{|\xi|-\lambda} \right). 
\end{align}
Let us focus our attention on the region $0\le \lambda \le |\xi|$ in the right-hand side above. In this case,  both terms in \eqref{a0} are non-negative  and as a consequence,
\[ \Big| Q_t(\lambda, \xi) \Big|^2 \ge \frac14  
\left(\frac{1-\cos\big( (|\xi|-\lambda)t \big)}{|\xi|-\lambda} \right)^2.\]
Therefore, for $|\xi|>0$, we get the following lower bound:
\begin{align*}
&\int_{\R} |\lambda|^{\alpha_0-1}   \Big| Q_t(\lambda, \xi) \Big|^2  d\lambda \ge   \int_0^{|\xi|}   |\lambda|^{\alpha_0-1}   \Big| Q_t(\lambda, \xi) \Big|^2  d\lambda\\
&\ge  \frac14 \int_0^{|\xi|}   |\lambda|^{\alpha_0-1}  
\left(\frac{1-\cos\big( (|\xi|-\lambda)t \big)}{|\xi|-\lambda} \right)^2 d\lambda\ge  \frac14|\xi|^{\alpha_0-1}  \int_0^{|\xi|}    
\left(\frac{1-\cos\big( (|\xi|-\lambda)t \big)}{|\xi|-\lambda} \right)^2 d\lambda.
\end{align*}
Thanks to the elementary change of variable $|\xi|-\lambda=\tau$, we thus end up with
\begin{align}\label{e:f1-norm2}
\int_{\R} |\lambda|^{\alpha_0-1}   \Big| Q_t(\lambda, \xi) \Big|^2  d\lambda \ge
\frac14|\xi|^{\alpha_0-1}  \int_{0}^{|\xi|}    \left(\frac{1-\cos( \tau t)}{\tau} \right)^2 d\tau. 
\end{align}

Let us summarize our computations so far. Plugging \eqref{e:f1-norm2} into \eqref{e:f1-norm1} and then \eqref{e:f1-norm}, we have obtained
\begin{equation*}
\|f_1(\cdot, t,x)\|_{\cH}^2 \ge  C_{\alpha_{0}} \int_{\R^d} |\xi|^{\alpha_0-3} \int_0^{|\xi|} \left(\frac{1-\cos(\tau t) }{\tau} \right)^2 \, d\tau\, \mu(d\xi),
\end{equation*}
where we recall that $C$ designates a constant whose exact value can change from line to line.
Hence, lower bounding the integral in $\xi$ over $\R^d$ by an integral over $\{|\xi|\ge 1\}$ we get 
\begin{eqnarray}
\|f_1(\cdot, t,x)\|_{\cH}^2 
&\ge &C_{\alpha_{0}} \int_{\{|\xi|\ge 1\}} |\xi|^{\alpha_0-3} \int_0^{|\xi|} \left(\frac{1-\cos(\tau t) }{\tau} \right)^2 d\tau\, \mu(d\xi)\notag\\
&\ge & C_{\alpha_{0}} \left(\int_0^{1} \left(\frac{1-\cos(\tau t) }{\tau} \right)^2 d\tau\right) \int_{\{|\xi|\ge 1\}} |\xi|^{\alpha_0-3}  \mu(d\xi). \label{e:f1-norm3}
\end{eqnarray}
Observe that the integral over $\tau$ above is just a strictly positive universal constant. Hence, one can recast \eqref{e:f1-norm3} as
\[ \|f_1(\cdot, t,x)\|_{\cH}^2 
\ge C \int_{\{|\xi|\ge 1\}} |\xi|^{\alpha_0-3}  \mu(d\xi),\]
where $C$ is a finite positive constant. Now it is readily checked that the right-hand side above is finite if and only iff \eqref{e:condition} is fulfilled. We have thus found that \eqref{e:condition} is a necessary condition to have $\|f_1(\cdot, t,x)\|_{\cH}^2 <\infty,$ which concludes the proof. 
\end{proof}

 \begin{remark}\label{rmk:3.3}
If the noise is independent of time, namely 
if $\alpha_0=0$, it is readily checked that~\eqref{e:f1-norm} becomes
\begin{align*}
\|f_1(\cdot, t,x)\|_{\cH}^2=&\int_{\R^d} \left(\int_0^t\frac{\sin(s|\xi|)}{|\xi|}ds\right)^2\mu(d\xi)
=\int_{\R^d} \frac{\left(1-\cos(t|\xi|)\right)^2}{|\xi|^4}\mu(d\xi).
\end{align*}
Therefore a sufficient condition (also necessary if the noise is spatially homogeneous, see \cite{BCC21}) for $\|f_1(\cdot, t,x)\|_{\cH}^2$ to be finite is 
\begin{equation}\label{e:condition-1}
 \int_{\R^d} \left(\frac{1}{1+|\xi|^2}\right)^2 \mu(d\xi)<\infty. 
\end{equation}
Note that if we plug the value $\alpha_0=0$ into our condition~\eqref{e:condition}, we get something stronger than~\eqref{e:condition-1}. This means that~\eqref{e:condition} is not an optimal condition for $\|f_1(\cdot, t,x)\|_{\cH}^2<\infty$ when $\alpha_0=0$. However, our  condition \eqref{e:condition} with $\alpha_0=0$  is still a sufficient condition for \eqref{e:swe} with time-independent noise to have a unique Skorohod solution as stated in Theorem \ref{thm:main-result}. Hence we are not able (at  this moment) to close the gap between necessary and sufficient condition for the time independent case $\alpha_0=0$. At a technical level, this is mainly due to the considerations leading to relations \eqref{case2}-\eqref{e:con-0} below. 

\end{remark}

\subsection{Bound for the Laplace transform of $n$-th chaos contributions}\label{sec:bd-laplace}

Recall that the function $f_n^w$ is given by \eqref{e:fn}, and features in \eqref{e:chaos} as the kernel for the $n$-th chaos contribution in the decomposition of $u(t,x)$. As a first step toward \eqref{e:chaos}, and accordingly toward existence and uniqueness of a solution for \eqref{e:swe}, let us prove the following Proposition, which provides an upper bound for the Laplace transform of the 2nd moment $\E[|I_n(f_n(\cdot, t,x))|^2]$.

\begin{proposition}\label{prop:chaos-laplace}
Assume that Hypothesis \ref{H:noise} holds true and that $u_0,u_1$ satisfy the conditions of Proposition \ref{prop:condition-u0-u1}. Let $p\ge 1$ be a fixed parameter. Then, there exists a positive constant $C_{\alpha_0}$  depending only on $\alpha_0$  such that 
\begin{equation}\label{e:bound-laplace-p}
2\int_0^\infty  e^{-2pt}\|f_n(\cdot, t,x)\|_{{\cal H}^{\otimes n}}^2 dt\le 
\begin{cases}
\displaystyle \left(C_{\alpha_0}\int_{\R^{d+1}} \frac{1}{|\tau|^{1-\alpha_0}\Phi_p(\tau, \eta)} 
\, d\tau \mu(d\eta)\right)^n, & \alpha_0>0,\medskip\\
\displaystyle  \left(\int_{\R^{d}} \frac{1}{\Phi_p(0, \eta)}  \, \mu(d\eta)\right)^n, & \alpha_0=0,
\end{cases}
\end{equation}
where $\Phi_p$ is the function defined on $\R^{d+1}$ by
\begin{equation}\label{e:phi-n}
\Phi_p(\tau,\eta):=\Big\vert(p-i\tau)^2+\vert\eta\vert^2\Big\vert^2
=4p^2\tau^2+(p^2+\vert\eta\vert^2-\tau^2)^2. 
\end{equation}
\end{proposition}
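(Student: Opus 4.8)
The plan is to compute $\|f_n(\cdot,t,x)\|^2_{\cH^{\otimes n}}$ in spatial Fourier modes, reduce it to an integral over the time simplex, and then read off the product structure \eqref{e:bound-laplace-p} after taking the Laplace transform in $t$. Recall from \eqref{e:fn'} that $f_n$ is the symmetrization of the single ordered kernel
\[ g_n(s_1,x_1,\dots,s_n,x_n;t,x)=G_{t-s_n}(x-x_n)\cdots G_{s_2-s_1}(x_2-x_1)\1_{[0<s_1<\cdots<s_n<t]}. \]
Since the wave kernel $G_u$ is non-negative for $d\in\{1,2,3\}$ and the covariance kernel $|r-s|^{-\alpha_0}\gamma(x-y)$ is non-negative as well, every cross term produced by the symmetrization is non-negative; this lets me bound $\|f_n\|^2_{\cH^{\otimes n}}$ by the integral in which only the first family of time variables is kept ordered while the second ranges freely over $[0,t]^n$. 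This non-negativity reduction is the structural backbone of the argument.

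Next I would pass to spatial Fourier variables. Taking the Fourier transform of the chain $G_{t-s_n}(x-x_n)\cdots G_{s_2-s_1}(x_2-x_1)$ in $x_1,\dots,x_n$ telescopes: writing $\eta_k:=\xi_1+\cdots+\xi_k$ for the partial sums and using $\wh G_u(\zeta)=\frac{\sin(u|\zeta|)}{|\zeta|}$ from \eqref{e:G-Fourier} together with the evenness of $\wh G_u$, each factor contributes $\frac{\sin((s_{k+1}-s_k)|\eta_k|)}{|\eta_k|}$ with the convention $s_{n+1}=t$. Because the covariance pairs the spatial frequencies of the two copies, both copies carry the same partial sums $\eta_k$ and are integrated against $\prod_k\mu(d\xi_k)$. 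What remains is a time integral over $\{0<s_1<\cdots<s_n<t\}$ (and its free companion) weighted by the temporal factors $\prod_j|s_j-s_j'|^{-\alpha_0}$.

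For $\alpha_0>0$ I would linearize the temporal weight via $|s-s'|^{-\alpha_0}=c_{\alpha_0}\int_\R|\lambda|^{\alpha_0-1}e^{i\lambda(s-s')}\,d\lambda$, introducing one frequency per pair, and then apply $2\int_0^\infty e^{-2pt}(\cdot)\,dt$. The engine of the whole computation is the resolvent identity $\int_0^\infty e^{-zs}\frac{\sin(s|\eta|)}{|\eta|}\,ds=\frac{1}{z^2+|\eta|^2}$: the ordered chain is a Laplace convolution in the successive time increments, so its Laplace transform in $t$ factors layer by layer into shifted resolvents $\big((p-i\sigma_k)^2+|\eta_k|^2\big)^{-1}$, where $\sigma_k$ is a partial sum of the temporal frequencies. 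Taking squared moduli, each layer then produces exactly the denominator $\Phi_p$ of \eqref{e:phi-n} evaluated at $(\sigma_k,\eta_k)$, paired with the weight $|\lambda_k|^{\alpha_0-1}=|\lambda_k|^{-(1-\alpha_0)}$. This is the mechanism that converts the $n$ nested layers into the announced $n$-fold product.

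The step I expect to be the main obstacle is the final decoupling of this nested expression into the clean $n$-th power of a single one-dimensional integral. Two genuine difficulties arise: the resolvents carry the partial sums $\eta_k=\xi_1+\cdots+\xi_k$ rather than the individual frequencies against which $\mu$ integrates, and the temporal frequencies of distinct layers are coupled by the linearization step. To pass from $\int\prod_k\Phi_p(\sigma_k,\eta_k)^{-1}\,\prod_k\mu(d\xi_k)$ to $\big(\int\Phi_p(\tau,\eta)^{-1}\mu(d\eta)\big)^n$ I would integrate out the frequencies one layer at a time --- the Poissonization/Laplace-transform reduction advertised in the introduction --- combining the non-negativity of $\mu$ with a shift-comparison estimate for $\int\Phi_p(\tau,\cdot)^{-1}\mu$ and a Young-type bound that absorbs the temporal convolutions into the constant $C_{\alpha_0}$. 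Finally, the case $\alpha_0=0$ is the degenerate version of the same scheme: the temporal weight is the constant $1$, no frequency $\lambda$ is introduced, and the layerwise Laplace transform produces the denominators $\Phi_p(0,\eta_k)$ directly, which after the same decoupling yields the stated bound governed by the single spatial integral $\int\Phi_p(0,\eta)^{-1}\mu(d\eta)$.
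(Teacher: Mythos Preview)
Your overall architecture matches the paper's: pass to spatial Fourier modes, linearize the time kernel via its Fourier representation, use the resolvent identity $\int_0^\infty e^{-zs}\frac{\sin(s|\eta|)}{|\eta|}ds=(z^2+|\eta|^2)^{-1}$ to convert the simplex convolution into a product of shifted resolvents, and finally decouple the partial sums by a shift comparison lemma (the paper's Lemma~\ref{lem:mp}). The last two paragraphs of your plan describe Steps~5--6 of the paper quite accurately.

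There is, however, a genuine gap at the Laplace step. After the Fourier manipulations, the quantity $\|g_n(\cdot,t,x)\|_{\cH^{\otimes n}}^2$ takes the form $\int |A(t,\lambda,\xi)|^2\,\prod|\lambda_k|^{\alpha_0-1}\,d\lambda\,\mu(d\xi)$, with $A(t,\lambda,\xi)$ the integral of the \emph{single} ordered chain over $[0,t]^n_<$. You then apply $2\int_0^\infty e^{-2pt}(\cdot)\,dt$ and assert that the Laplace transform ``factors layer by layer'' into resolvents whose squared moduli give $\Phi_p$. But the object being Laplace-transformed is $|A(t)|^2=A(t)\overline{A(t)}$, a product of \emph{two} chains that share the same terminal time $t$; its Laplace transform does not factor. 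What factors is $\int_0^\infty e^{-pt}A(t)\,dt$ for a single chain, and $\big|\int_0^\infty e^{-pt}A(t)\,dt\big|^2$ is not the same as $\int_0^\infty e^{-2pt}|A(t)|^2\,dt$. Your reduction ``second family ranges freely over $[0,t]^n$'' does not resolve this, since both families still meet at the common endpoint $t$.

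The paper handles exactly this point by a Poissonization argument that you invoke by name but place at the wrong step. One writes $\|g_n(\cdot,t,x)\|_{\cH^{\otimes n}}^2=F_n(t,t)$ where $F_n(t,\bar t)$ carries two independent terminal times, observes that if $T\sim\mathrm{exp}(2p)$ then $T\overset{d}{=}\tau\wedge\bar\tau$ with $\tau,\bar\tau$ i.i.d.\ $\mathrm{exp}(p)$, and uses the monotonicity of $F_n$ in each argument (here the non-negativity of $G$ and $\gamma$ is essential) to get
\[
2\int_0^\infty e^{-2pt}F_n(t,t)\,dt\ \le\ p\int_0^\infty\!\!\int_0^\infty e^{-p(t+\bar t)}F_n(t,\bar t)\,dt\,d\bar t .
\]
Only after this decoupling do the two chains separate and your resolvent factorization become legitimate, each chain now being Laplace-transformed with its own parameter $p$. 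The ``Poissonization/Laplace reduction advertised in the introduction'' refers to this step, not to the subsequent shift-comparison that peels off the partial sums $\eta_k$.
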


\begin{proof} 
This somewhat lengthy proof will be split onto several steps for the reader's convenience.  We will focus on the case $\alpha_0>0$, the case $\alpha_0=0$ being proved in a similar way. 

\smallskip

\noindent\textit{Step 1: Reduction to a constant initial condition.}  Let us write $f^{w}_n=\frac{1}{n!}\sum_{\sigma\in\Sigma_n} f_n^{w,\sigma}$, with
\begin{align*}
&f_n^{w,\sigma}(s_1, x_1, \dots, s_n, x_n, t,x)\\
&:=  G_{t-s_{\sigma(n)}}(x-x_{\sigma(n)})\cdots G_{s_{\sigma(2)}-s_{\sigma(1)}}(x_{\sigma(2)}-x_{\sigma(1)})\times w(s_{\sigma(1)}, x_{\sigma(1)})\1_{[0<s_{\sigma(1)}<\dots<s_{\sigma(n)}<t]}. 
\end{align*}
It is easily seen, thanks to Jensen's inequality applied to the uniform measure on $\Sigma_n$, that 
\begin{equation}\label{e:chaos-comparison}
\|f^w_n(\cdot, t,x)\|_{{\cal H}^{\otimes n}}^2 \le\frac{1}{n!}\sum_{\sigma\in\Sigma_n}  \|f_n^{w,\sigma}(\cdot, t,x)\|_{{\cal H}^{\otimes n}}^2.
\end{equation}
Now, using first the expression \eqref{e:inner-product} and then the result of Proposition \ref{prop:condition-u0-u1} (which guarantees that $w\in L^\infty([0,T]\times \R^d)$), we derive that
\begin{align}
&\|f^{w,\sigma}_n(\cdot, t,x)\|_{{\cal H}^{\otimes n}}^2\notag\\
&= \int_{(\R_+^n)^2}d{\bf s}d{\bf s'} \int_{(\R^d)^2} d{\bf x}d{\bf x'}\, \bigg(\prod_{k=1}^n|s_k-s_k'|^{-\alpha_0} \gamma(x_k-x_k')\bigg) \notag\\
&\qquad G_{t-s_{\sigma(n)}}(x-x_{\sigma(n)})\cdots G_{s_{\sigma(2)}-s_{\sigma(1)}}(x_{\sigma(2)}-x_{\sigma(1)})\times w(s_{\sigma(1)}, x_{\sigma(1)})\1_{[0<s_{\sigma(1)}<\dots<s_{\sigma(n)}<t]}\notag\\
&\qquad \quad G_{t-s'_{\sigma(n)}}(x-x'_{\sigma(n)})\cdots G_{s'_{\sigma(2)}-s'_{\sigma(1)}}(x'_{\sigma(2)}-x'_{\sigma(1)})\times w(s'_{\sigma(1)}, x'_{\sigma(1)})\1_{[0<s'_{\sigma(1)}<\dots<s'_{\sigma(n)}<t]}  \notag\\
&\leq \|w\|_{L^\infty([0,T]\times \R^d)}^2  \int_{([0,t]_<^n)^2}\int_{(\R^d)^2} \bigg(\prod_{k=1}^n|s_k-s_k'|^{-\alpha_0} \gamma(x_k-x_k')\bigg)\notag\\
&\hspace{4cm}\times \bigg(\prod_{k=1}^n G_{s_k-s_{k-1}}(x_k-x_{k-1}) G_{s_k'-s_{k-1}'}(x_k'-x_{k-1}')\bigg) \,  d{\bf x}d{\bf x'}d{\bf s}d{\bf s'} , \notag
\end{align}
where we use the convention $s_0=0$ and $x_0=x'_0=x$, and the notation $d{\bf s}=ds_1\cdots ds_n$ (we use similar conventions for $d{\bf x},d{\bf x'}$ and $d{\bf s'}$). Note also that we use $[0,t]^n_<$  to denote the $n$-th order simplex $[0<s_1<s_2<\dots<s_n<t]$ on $[0,t]$. 
Injecting the latter estimate into~\eqref{e:chaos-comparison}, and with expression \eqref{e:inner-product} in mind, we get that
$$
\|f^w_n(\cdot, t,x)\|_{{\cal H}^{\otimes n}} \leq \|w\|_{L^\infty([0,T]\times \R^d)} \|g_n(\cdot, t,x)\|_{{\cal H}^{\otimes n}},
$$
where $g_n$ is defined by 
\begin{equation}\label{e:gn}
g_n(s_1, x_1, \dots, s_n, x_n, t,x):=G_{t-s_n}(x-x_{n})\cdots G_{s_{2}-s_{1}}(x_{2}-x_{1})\1_{[0,t]_<^n}(s_1, \dots, s_n).
\end{equation}
In the sequel, we shall upper bound the terms involving $g_n$.

\smallskip

\noindent\textit{Step 2: Expression for the Laplace transform.}
One has
\begin{align}
\|g_n(\cdot, t,x)\|_{{\cal H}^{\otimes n}}^2&= \int_{([0,t]_<^n)^2}\int_{(\R^d)^2} \bigg(\prod_{k=1}^n|s_k-s_k'|^{-\alpha_0} \gamma(x_k-x_k')\bigg)\notag\\
&\qquad \times \bigg(\prod_{k=1}^n G_{s_k-s_{k-1}}(x_k-x_{k-1}) G_{s_k'-s_{k-1}'}(x_k'-x_{k-1}')\bigg)  d{\bf x}d{\bf x'}d{\bf s}d{\bf s'} . \label{e:gn-norm}
\end{align}
In order to ease our arguments below, let us recast \eqref{e:gn-norm} in a slightly more complicated way by doubling the $t$ variable. That is, write
\begin{equation}\label{e:Fn}
\| g_n(\cdot, t,x)\|^2_{\cH^{\otimes n}} =F_n(t,t), 
\end{equation}
where the function $F: \R^2 \to \R_+$ is defined by 
\begin{multline}\label{e:Fn'}
F_n(t, \bar t) =  \int_{[0,t]_<^n\times[0,\bar t]_<^n}\int_{(\R^d)^2} \bigg(\prod_{k=1}^n|s_k-s_k'|^{-\alpha_0} \gamma(x_k-x_k')\bigg)  \\
 \times \bigg(\prod_{k=1}^n 
 G_{s_k-s_{k-1}}(x_k-x_{k-1}) G_{s_k'-s_{k-1}'}(x_k'-x_{k-1}')\bigg)  d{\bf x}d{\bf x'}d{\bf s}d{\bf s'} .
\end{multline}
Also recall that we are interested in the Laplace transform of $\|g_n(\cdot, t,x)\|_{{\cal H}^{\otimes n}}^2$, for which we introduce a notation:
\begin{equation}\label{eq:def-Lambda}
\Lambda_{n}(p)
\equiv
\int_0^\infty e^{-2pt}  \|g_n(t,x,\cdot)\|_{{\cal H}^{\otimes n}}^2 dt=\int_0^\infty e^{-2pt} F_n(t,t) \, dt.
\end{equation}

\noindent\textit{Step 3: Reverse $L^2$ bounds.}  We will now use monotonicity properties of exponential random variables in order to reduce the $L^2$-type norm \eqref{eq:def-Lambda} to a product of $L^1$-type norms. The argument is inspired by  page 954 in \cite{chen19} and goes as follows:

\begin{enumerate}[wide, labelwidth=!, labelindent=0pt, label={(\roman*)}]
\setlength\itemsep{.00in}

\item
The integral in \eqref{eq:def-Lambda} can be seen as an integral with respect to an exponential variable $T$ with parameter $2p$. Namely 
\[ \Lambda_n(p) =\frac1{2p} \E[F_n(T,T)], \quad\text{ with }\quad T\sim \text{exp}(2p).\]

\vspace{-.1in}

\item 
It is elementary to prove the identity in law $T\overset{(d)}= \tau\wedge \bar \tau$, with $\tau, \bar \tau$ i.i.d. with common law exp$(p)$.  Hence,
\begin{equation}\label{e:Lambda}
\Lambda_n(p) =\frac1{2p} \E[F_n(\tau\wedge \bar \tau, \tau\wedge\bar \tau)]. 
\end{equation}

\vspace{-.1in}

\item 
Due to the positivity of $\gamma$ and $G$, it is readily checked that both $t\mapsto F_n(t, \bar t)$ and $\bar t \mapsto F_n(t, \bar t)$ are non-decreasing functions. Plugging this information into \eqref{e:Lambda} we obtain
\begin{equation}\label{e:Lambda'}
\Lambda_n(p) \le \frac1{2p} \E[F_n(\tau, \bar \tau)]. 
\end{equation}
\end{enumerate}
Let us now recall the expression \eqref{e:Fn'} for $F_n$ and write the expected value with respect to $\tau, \bar \tau$ in \eqref{e:Lambda'} explicitly. Summarizing our considerations (i)-(iii) above, we have obtained the following relation:
\begin{multline}\label{e:est-Lambda}
\Lambda_n(p) \le \frac1{2p} \int_{\R_+^2} dt d\bar t ~ p \, e^{-p(t+\bar t)} 
 \times \int_{[0,t]_<^n\times[0,\bar t]_<^n}\int_{(\R^d)^2} \bigg(\prod_{k=1}^n|s_k-s_k'|^{-\alpha_0} \gamma(x_k-x_k')\bigg)  \\
 \times \bigg(\prod_{k=1}^n G_{s_k-s_{k-1}}(x_k-x_{k-1}) G_{s_k'-s_{k-1}'}(x_k'-x_{k-1}')\bigg)  d{\bf x}d{\bf x'}d{\bf s}d{\bf s'}.
\end{multline}

We now rearrange terms above thanks to Fubini's theorem, in order to obtain a more readable version of \eqref{e:est-Lambda}. To this aim, let us introduce an additional notation. Namely, for $s_1, \dots, s_n \in [0,t]$ and $x_1, \dots, x_n\in\R^d$, we set 
    \begin{equation*}
H_p(s_1,x_1 \dots, s_n, x_n)
    = \int_0^\infty e^{-pt} \bigg(\prod_{k=1}^n
    G_{s_{k}-s_{k-1}}( x_{k}-x_{k-1})\bigg)\1_{[0<s_1<\dots<s_n<t]} dt.
\end{equation*}
Then \eqref{e:est-Lambda} can be expressed as   
    \begin{multline}\label{e:laplace}
    \Lambda_n(p)
    \le
    \frac p2 \int_{(\R^{d+1})^{2n}} H_p(s_1,x_1 \dots, s_n,x_n) H_p(s_1', x_1',\dots, s_n', x_n') \\
    \times \bigg(\prod_{k=1}^n|s_k-s_k'|^{-\alpha_0} \gamma(x_k-x_k')\bigg)d{\bf x} d{\bf x'}d{\bf s}d{\bf s'}.
    \end{multline}

 \noindent\textit{Step 4: Some algebraic manipulations.} 
 Notice that the right-hand side of \eqref{e:laplace} can be written in terms of a convolution product. That is, we have
   \begin{equation}\label{e:lambda-n0}
   \Lambda_n(p)\le \frac p2 \int_{(\R^{d+1})^n} H_p(s_1,x_1, \dots, s_n,x_n) \phi_p(s_1,x_1, \dots, s_n,x_n) dyds, 
   \end{equation}
   where
   \[\phi_p(s_1,x_1, \dots, s_n,x_n)= H_p* \left[\prod_{k=1}^n |s_k|^{-
   \alpha_0} \gamma(x_k)\right].\]
Therefore, one can express \eqref{e:lambda-n0} in Fourier modes thanks to Plancherel's identity. This yields, for some positive constant $C_{\alpha_0}$, 
   \begin{multline}   \label{e:laplace'}
\Lambda_n(p)\le 
\frac p2 C^n_{\alpha_0} \int_{(\R^{d+1})^n}\bigg(\prod_{k=1}^n{1\over\vert\lambda_k\vert^{1-\alpha_0}}\bigg) 
\\
\times 
\left|\int_{\R^n} \exp\left( i \sum_{k=1}^n \lambda_k s_k \right) \wh H_p(s_1, \xi_1,\dots, s_n, \xi_n)d{\bf s} \right|^2 d\lambda \mu(d\xi), 
   \end{multline}
   where we abuse the notation $d\lambda=d\lambda_1\cdots d\lambda_n$ and $\mu(d\xi)=\mu(d\xi_1)\cdots\mu(d\xi_n),$ and $\wh H_p$ means the Fourier transform of $H_p$ in its space variable.  
   
   Let us now say a few words about the computation of $\wh H_p$ in \eqref{e:laplace'}. Denoting without any further mention the spatial Fourier transform by $\cF g$ or $\wh g$, we have 
   \begin{multline}\label{e:fourier-H}
\wh H_p(s_1, \xi_1, \dots, s_n, \xi_n)  \\
   =\int_0^\infty e^{-pt} \cF\left( \prod_{k=1}^n G_{s_k-s_{k-1}}(y_k-y_{k-1})\right) (\xi_1, \dots, \xi_n) \,\1_{[0,t]^n_<}(s_s, \dots, s_n)\,dt. 
   \end{multline}
   Furthermore, performing the change of variables $z_k=y_k-y_{k-1}$ and rearranging the Fourier variables thanks to some easy algebraic manipulations, it is readily checked that 
   \[\cF\left(\prod_{k=1}^n G_{s_k-s_{k-1}}(y_k-y_{k-1})\right) (\xi_1, \dots, \xi_n)=\prod_{k=1}^n \wh G_{s_k-s_{k-1}}\left(\sum_{j=k}^n \xi_j\right).\]
   Plugging this identity in \eqref{e:fourier-H} and then in \eqref{e:laplace'}, we end up with 
   \begin{equation}\label{e:lambda-n}
  \Lambda_n(p) \le \frac p2 C^n_{\alpha_0}
  \int_{(\R^{d+1})^n}  \left|\Psi_p(\lambda, \xi)\right|^2 \,
  \prod_{k=1}^n{1\over\vert\lambda_k\vert^{1-\alpha_0}} \, d\lambda \mu(d\xi),
\end{equation}
where we have set 
\begin{equation}\label{e:Psi}
\Psi_p(\lambda, \xi)= \int_0^\infty e^{-pt}\int_{[0,t]_<^n}
\exp\bigg(i\sum_{k=1}^n\lambda_ks_{k}\bigg)
\prod_{k=1}^n \wh G_{s_{k}-s_{k-1}}\Big(\sum_{j=k}^n\xi_{j}\Big)d{\bf s}dt.
\end{equation}

\noindent\textit{Step 5: Some Fourier computations.} In this step, we write a more explicit version of \eqref{e:lambda-n} by computing some Fourier transforms. 
Namely write $s_k-s_{k-1}=r_k$ for $k=1, \dots, n+1$, where we have set $s_0=0$ and $s_{n+1}=t$. It is easily seen that the function $\Psi_p$ in \eqref{e:Psi} becomes 
\[\Psi_p(\lambda, \xi):=\int_{\R_+^{n+1}} \exp\left(-p\sum_{k=1}^{n+1}r_k \right)\prod_{k=1}^n \exp\left(ir_k\sum_{j=k}^n \lambda_j\right)\wh G_{r_k} \Big(\sum_{j=k}^n\xi_j\Big) ~d{\bf r}. \]
Integrating over the variable $r_{n+1}$, we are left with a product of $n$ integrals: 
\begin{equation}\label{e:Phi}
\Psi_p(\lambda,\xi)=\frac1p\prod_{k=1}^n\int_{\R_+} \exp\left(\Big(-p +i \sum_{j=k}^n \lambda_j\Big)r \right) \wh G_{r}\Big(\sum_{j=k}^n \xi_j\Big)dr.
\end{equation}
We now give a more explicit expression for $\Psi_p$ thanks to the expression \eqref{e:G-Fourier} for $\wh G$. Namely, for any $\beta\in \mathbb C$ with positive real part, it is well known that 
  \begin{equation}\label{e:fourier-sin}
  \int_0^\infty e^{-\beta r} \frac{\sin(r|\eta|)}{|\eta|}dr = \frac{1}{\beta^2+|\eta|^2}.
  \end{equation}
Reporting this expression into \eqref{e:Phi}, we get 
\[\Psi_p(\lambda, \xi) =\frac1p\prod_{k=1}^n\bigg[\Big(p-i\sum_{j=k}^n\lambda_{j}\Big)^2+\Big\vert\sum_{j=k}^n\xi_{j}\Big\vert^2\bigg]^{-1}. \]
Going back to \eqref{e:lambda-n}, we have thus obtained
  \begin{align}\label{e:lambda-n'}
\Lambda_n(p)  &\le \frac{C^n_{\alpha_0}}{2} \int_{(\R^{d+1})^n}\bigg(\prod_{k=1}^n{1\over\vert\lambda_k\vert^{1-\alpha_0}}\bigg) \prod_{k=1}^n\bigg\vert\Big(p-i\sum_{j=k}^n\lambda_{j}\Big)^2+\Big\vert\sum_{j=k}^n\xi_{j}\Big\vert^2
\bigg\vert^{-2} d\lambda \,\mu(d\xi).
\end{align}
Moreover, recalling the expression \eqref{e:phi-n} for $\Phi_p$, \eqref{e:lambda-n'} can be expressed as 
\begin{equation}\label{e:lambda-n''}
\Lambda_n(p)  \le \frac{C^n_{\alpha_0}}{2} \int_{(\R^{d+1})^n}\bigg(\prod_{k=1}^n{1\over\vert\lambda_k\vert^{1-\alpha_0}}\bigg) \prod_{k=1}^n\left( \Phi_p\left(\sum_{j=k}^n\lambda_{j},\sum_{j=k}^n\xi_{j}\right)
\right)^{-1} d\lambda \,\mu(d\xi).
\end{equation}

\noindent\textit{Step 6: Reduction to $1$-d integrals.}  Our next aim is to transform the right-hand side of~\eqref{e:lambda-n''} into a product of  integrals in $\R^{d+1}$.
In order to achieve this we invoke the following lemma, which is an elaboration of \cite[Lemma 3.1]{chen19}:

\begin{lemma}\label{lem:mp}
Let $f$ be a function and $\nu$ be a measure, both defined on $\R^m$ and both positive. We assume that the Fourier transforms $\wh f$ and $\wh \nu$ of $f$ and $\nu$ are positive functions. Then for all $\eta\in\R^m$ we have
\begin{equation}\label{e:mp}
\int_{\R^m} f(\xi-\eta) \nu(d\xi) \le \int_{\R^m} f(\xi) \nu(d\xi). 
\end{equation}
\end{lemma}
\begin{proof}
A basic application of Parseval's identity shows that 
\[\int_{\R^m} f(\xi-\eta) \nu(d\xi) =\int_{\R^m} e^{ix\cdot \eta} \wh f(x) \wh \nu(x) dx.\]
We now trivially bound the oscillating exponential term by $1$ in the right hand side above, and apply Parseval's identity again, in order to get our claim \eqref{e:mp}. 
\end{proof}
Lemma \ref{lem:mp} is applied successively to the $\lambda, \xi$ variables in \eqref{e:lambda-n''}.  We now provide some details about the $(\lambda_1, \xi_1)$-integrals. That is, in the right-hand side of \eqref{e:lambda-n''}, the $(\lambda_1, \xi_1)$ variables appear in the integral 
\begin{equation}\label{e:Lambda-n1}
\Lambda_{n,1}(p) =\int_{\R^{d+1}} \left|\psi_p(\lambda_1+l, \xi_1+x)\right|^2 \nu(d\lambda_1, d\xi_1),
\end{equation}
where $l=\sum_{j=2}^n \lambda_j, x=\sum_{j=2}^n \xi_j$ and 
where the function $\psi_p$ and the measure $\nu$ are respectively defined by 
\begin{equation}\label{e:psi-p}
\psi_p(\tau, \eta) = 
\left( \Phi_{p}(\tau,\eta)  \right)^{-1/2}
=
\Big( (p-i\tau)^2+|\eta|^2\Big)^{-1}, 
\quad
\nu(d\lambda_1, d\xi_1) = \frac{1}{|\lambda_1|^{1-\alpha_0}} d\lambda_1 \mu(d\xi_1).
\end{equation}
If we wish to apply Lemma \ref{lem:mp} to the integral in \eqref{e:Lambda-n1}, we thus have to prove that the space-time Fourier transforms of $|\psi_p|^2$ and $\nu$ are both positive functions. This is argued below:

\begin{enumerate}[wide, labelwidth=!, labelindent=0pt, label={(\alph*)}]
\setlength\itemsep{.05in}

\item
Owing to \eqref{e:fourier-sin} and \eqref{e:G-Fourier}, we have 
\begin{align*}
\psi_p(\tau, \eta)=\int_{\R^{d+1}}\exp\bigg(i(\tau t+\eta\cdot x)\bigg)\1_{[0,\infty)}(t)e^{-pt}G_t(x)dxdt.
\end{align*}
Otherwise stated, $\psi_p$ is the Fourier transform of a non-negative function $g_p$ defined on $\R^{d+1}$ by 
$g_p(t,x)=\1_{[0,\infty)}(t)e^{-pt}G_t(x)$.
Therefore $\cF \psi_p$ is a positive function. Since $\cF|\psi_p|^2=(\cF\psi_p)*(\cF \bar \psi_p)$, we also get that $\cF|\psi_p|^2$ is a positive function.

\item 
According to \eqref{e:psi-p}, the space-time Fourier transform of the measure $\nu$ is given by 
$\cF\nu(t, x) =C_d |t|^{-\alpha_0} \gamma(x)$,
which is a positive function. 
\end{enumerate}

\noindent
We are thus in a position to apply Lemma \ref{lem:mp} to the integral in \eqref{e:Lambda-n1}, which yields 
\begin{equation}\label{e:lambda-n1'}
\Lambda_{n,1}(p) \le \int_{\R^{d+1}} \left|\psi_p(\lambda_1, \xi_1)\right|^2 \nu(d\lambda_1, d\xi_1)=\int_{\R^{d+1}} \left(\Phi_p(\lambda_1, \xi_1) \right)^{-1} \nu(d\lambda_1, d\xi_1),
\end{equation}
where we observe that the right-hand side above does not depend on the variables $
\lambda_j, \xi_j$ with $j=2, \dots, n$ anymore, recalling that $\Phi_p$ is given by \eqref{e:phi-n}. 

As mentioned above, we now just have to iterate \eqref{e:lambda-n1'} into \eqref{e:lambda-n'}.  This allows to reduce $\Lambda_n(p)$ to a product of $n$ integrals in $\R^{d+1}$:
\begin{align}
\Lambda_n(p)
\le\frac12 \bigg(C_{\alpha_0}\int_{\R^{d+1}}  \vert\tau\vert^{\alpha_0-1}
 \left(\Phi_p(\tau,\eta)\right)^{-1}
d\tau \mu(d\eta)\bigg)^n. \label{e:laplace-bound}
\end{align}

\noindent\textit{Step 7: Conclusion.} 
Recall that according to \eqref{eq:def-Lambda}, $\Lambda_n(p)$ is the Laplace transform of $t\mapsto \|g_n(t, x,\cdot)\|^2_{\cH^{\otimes n}}$. Moreover, we have reduced our computations for $f_n$ to those of $g_n$ in Step 1. Hence \eqref{e:laplace-bound} easily yields \eqref{e:bound-laplace-p}. This finishes our proof.
\end{proof}

\subsection{On the sufficiency of the condition}\label{sec:sufficiency}

In this subsection, we aim to show that the necessary condition \eqref{e:condition} obtained in Section~\ref{sec:necessary} is also a sufficient condition for the existence and uniqueness of the mild Skorohod solution to~\eqref{e:swe}. 
We first give the following preparatory result.

\begin{lemma}\label{lem:LD}
Assume that Hypothesis \ref{H:noise} holds true, and set
 \begin{equation}\label{defi:-j-n}
L_{\alpha_0,n}:=
\begin{cases}
\displaystyle \int_{\R^{d+1}} \frac1{|\lambda|^{1-\alpha_0} \, \Phi_n(\lambda, \xi)}~ \mu(d\xi)d\lambda, &\alpha_0>0,\medskip\\
\displaystyle \int_{\R^{d}} \frac1{ \Phi_n(0, \xi)}~ \mu(d\xi), &\alpha_0=0,
\end{cases}
\end{equation}
where $\Phi_n$ is defined in \eqref{e:phi-n}.
Then under the condition \eqref{e:condition}, we have  
\begin{equation}\label{e:lim-phi-n}
L_{\alpha_0,n}<\infty \ \text{ for every} \ n\geq 1, \qquad \text{and} \qquad \displaystyle \lim_{n\to\infty}nL_{\alpha_0,n}=0 \, .
\end{equation}

\end{lemma}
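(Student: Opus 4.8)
The plan is to estimate $L_{\alpha_0,n}$ by extracting the dependence on $n$ from the function $\Phi_n(\lambda,\xi)=|( n-i\lambda)^2+|\xi|^2|^2=4n^2\lambda^2+(n^2+|\xi|^2-\lambda^2)^2$. First I would isolate the finiteness claim from the limit claim, since they have different flavors: finiteness for each fixed $n$ should follow from condition \eqref{e:condition} together with the fact that $\Phi_n$ is bounded below by a positive constant (indeed $\Phi_n(\lambda,\xi)\ge 4n^2\lambda^2$ and $\Phi_n(\lambda,\xi)\ge(n^2+|\xi|^2-\lambda^2)^2$, so the integrand has no genuine singularity away from $\lambda=0$, where the $|\lambda|^{-(1-\alpha_0)}$ weight is integrable because $\alpha_0<1$). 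The subtler point, and the heart of the lemma, is the decay statement $\lim_{n\to\infty} nL_{\alpha_0,n}=0$.

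For the decay, the natural move is to rescale. I would substitute $\lambda = n\tilde\lambda$ and $\xi = n\tilde\xi$ (formally, understanding that $\mu$ is only rescaled in the $\xi$-variable) to expose the scaling $\Phi_n(n\tilde\lambda, n\tilde\xi)=n^4|(1-i\tilde\lambda)^2+|\tilde\xi|^2|^2$. Since $\mu$ need not be homogeneous, instead of a literal change of variables in $\mu$ I would split the domain of integration into a high-frequency region $\{|\xi|\ge n\}$ and a low-frequency region $\{|\xi|< n\}$ (similarly for the $\alpha_0=0$ case). On the high-frequency region the factor $\Phi_n$ is dominated by $(|\xi|^2-\lambda^2+n^2)^2$ and, crucially, comparable to the large-$|\xi|$ behavior governing \eqref{e:condition}: here I expect to bound $\frac{1}{|\lambda|^{1-\alpha_0}\Phi_n(\lambda,\xi)}$ after integrating out $\lambda$ by something like $C|\xi|^{-(3-\alpha_0)}\sim C(1+|\xi|^2)^{-(3-\alpha_0)/2}$, which is $\mu$-integrable by \eqref{e:condition}; the tail $\int_{\{|\xi|\ge n\}}$ then tends to $0$ by dominated convergence, and one gains the extra factor of $n$ from the scaling. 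On the low-frequency region $\{|\xi|<n\}$ the denominator is genuinely large (of order $n^4$ away from the resonance set $|\xi|^2=\lambda^2-n^2$, which is empty when $|\xi|<n$ since then $n^2+|\xi|^2-\lambda^2$ can still vanish — so care is needed), and I would bound the $\lambda$-integral $\int_\R |\lambda|^{\alpha_0-1}\Phi_n(\lambda,\xi)^{-1}\,d\lambda$ uniformly.

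The key computation I would carry out explicitly is the inner one-dimensional integral
\[
J_n(\xi):=\int_{\R} \frac{d\lambda}{|\lambda|^{1-\alpha_0}\,\Phi_n(\lambda,\xi)},
\]
and I expect a bound of the form $J_n(\xi)\le C\,(n^2+|\xi|^2)^{-(3-\alpha_0)/2}$, obtained by distinguishing $|\lambda|\le \sqrt{n^2+|\xi|^2}$ from $|\lambda|> \sqrt{n^2+|\xi|^2}$ and using $\Phi_n(\lambda,\xi)\ge 4n^2\lambda^2$ on the resonant band together with $\Phi_n(\lambda,\xi)\ge \lambda^4$ far out. Assuming such an estimate,
\[
nL_{\alpha_0,n}\le C\,n\int_{\R^d}\big(n^2+|\xi|^2\big)^{-\frac{3-\alpha_0}{2}}\mu(d\xi),
\]
and since $n(n^2+|\xi|^2)^{-(3-\alpha_0)/2}\le (n^2+|\xi|^2)^{-(2-\alpha_0)/2}\le (1+|\xi|^2)^{-(2-\alpha_0)/2}$ is dominated by the $\mu$-integrable function from \eqref{e:condition} (as $3-\alpha_0>2-\alpha_0$) and tends pointwise to $0$ as $n\to\infty$, the dominated convergence theorem gives $\lim_{n\to\infty}nL_{\alpha_0,n}=0$.

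The main obstacle I anticipate is obtaining the sharp bound $J_n(\xi)\le C(n^2+|\xi|^2)^{-(3-\alpha_0)/2}$ uniformly in $n$ and $\xi$, since $\Phi_n$ has a resonance structure (its minimum over $\lambda$ occurs near $\lambda^2=|\xi|^2+n^2$ where the second term vanishes and only the $4n^2\lambda^2$ term survives) and one must simultaneously control the weak singularity of the weight $|\lambda|^{\alpha_0-1}$ at the origin. Handling both the $\alpha_0>0$ and $\alpha_0=0$ cases with the same bound — in the latter the $\lambda$-integral degenerates to evaluation at $\lambda=0$, giving $\Phi_n(0,\xi)=(n^2+|\xi|^2)^2$ directly, which is even cleaner — should be straightforward once the one-dimensional estimate is in place.
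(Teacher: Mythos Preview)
Your dominated convergence step contains a genuine error. You claim that $(1+|\xi|^2)^{-(2-\alpha_0)/2}$ is $\mu$-integrable ``by \eqref{e:condition}, as $3-\alpha_0>2-\alpha_0$'', but this inequality points the wrong way: since $1+|\xi|^2\ge 1$, a \emph{smaller} exponent gives a \emph{larger} function, so integrability of $(1+|\xi|^2)^{-(3-\alpha_0)/2}$ does not imply integrability of $(1+|\xi|^2)^{-(2-\alpha_0)/2}$. Concretely, for $d=3$, $\alpha_0=\tfrac12$ and $\mu(d\xi)=|\xi|^{-\beta}d\xi$ with $\tfrac12<\beta<\tfrac32$, condition \eqref{e:condition} holds while your dominating function is not integrable. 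Hence, with your stated bound $J_n(\xi)\le C(n^2+|\xi|^2)^{-(3-\alpha_0)/2}$, the conclusion $nL_{\alpha_0,n}\to 0$ simply does not follow.

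The missing idea is that $J_n$ in fact satisfies the \emph{sharper} estimate
\[
J_n(\xi)\le \frac{C}{n}\,(n^2+|\xi|^2)^{-\frac{3-\alpha_0}{2}},
\]
and the extra $n^{-1}$ comes precisely from the resonant band you identify. Your proposed decomposition (using only $\Phi_n\ge 4n^2\lambda^2$ for $|\lambda|\le\sqrt{n^2+|\xi|^2}$) does not yield this: that bound makes the $\lambda$-integral diverge at $0$, since $|\lambda|^{\alpha_0-3}$ is not locally integrable. One needs a three-region split: near $\lambda=0$ use $\Phi_n\ge c(n^2+|\xi|^2)^2$; near the resonance $|\lambda|\approx\sqrt{n^2+|\xi|^2}$ write $\Phi_n\gtrsim (n^2+|\xi|^2)\big(n^2+(\sqrt{n^2+|\xi|^2}-|\lambda|)^2\big)$ and integrate $\int_\R (n^2+u^2)^{-1}du=\pi/n$; for large $|\lambda|$ use $\Phi_n\gtrsim\lambda^4$. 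With the sharper bound in hand, $nL_{\alpha_0,n}\le C\int_{\R^d}(n^2+|\xi|^2)^{-(3-\alpha_0)/2}\mu(d\xi)$, which is now dominated (for $n\ge 1$) by the $\mu$-integrable function $(1+|\xi|^2)^{-(3-\alpha_0)/2}$ from \eqref{e:condition}, and dominated convergence applies. The paper takes a different route---an algebraic lower bound $\Phi_n(\lambda,\xi)\ge n^4+(|\xi|+|\lambda|)^2(n^2+(|\xi|-|\lambda|)^2)$ followed by a splitting in $|\xi|$ and $|\lambda|$ relative to $|\xi|$---but your strategy of first integrating out $\lambda$ is perfectly viable once the estimate on $J_n$ is corrected.
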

\begin{proof}
\noindent\textit{Case 1: $\alpha_0>0$.} Starting from the right hand side of \eqref{e:phi-n}, some elementary manipulations show that
\[4n^2\lambda^2 + (n^2 +|\xi|^2-\lambda^2)^2= 4n^2|\xi|^2 + (n^2 +\lambda^2 -|\xi|^2)^2.\]
Therefore one can symmetrize $\Phi_n$ in the following way:
\begin{align*}\Phi_n(\lambda,\xi)&={1\over 2}\Big\{
4n^2\lambda^2+(n^2+\vert\xi\vert^2-\lambda^2)^2
+4n^2\vert\xi\vert^2+(n^2+\lambda^2-\vert\xi\vert^2)^2\Big\}\cr
&=n^4+(\vert\xi\vert+\vert\lambda\vert)^2(\vert\xi\vert-\vert\lambda\vert)^2
+2n^2(\vert\xi\vert^2+\lambda^2). 
\end{align*}
Next some elementary algebraic manipulations yield
\begin{align*}\Phi_n(\lambda,\xi)&
\ge n^4+(\vert\xi\vert+\vert\lambda\vert)^2(\vert\xi\vert-\vert\lambda\vert)^2
+n^2(\vert\xi\vert+\vert\lambda\vert)^2\cr
&=n^4+(\vert\xi\vert+\vert\lambda\vert)^2\big(n^2+(\vert\xi\vert-\vert\lambda\vert)^2\big).
\end{align*}
Hence to obtain the desired assertion \eqref{e:lim-phi-n}, it suffices to prove that
\begin{equation}\label{e:lim-phi-n'}
K_n<\infty \ (\text{for all} \ n\geq 1) \quad \text{and} \lim_{n\to\infty}nK_n
=0, 
\quad\text{with}\quad
K_n:=\int_{\R^{d+1}}\tilde \Phi_n(\lambda, \xi)\,\mu(d\xi)d\lambda,
\end{equation}
 and where the function $\tilde \Phi_n$ is defined by  
 \begin{equation}\label{b1}
 \tilde \Phi_n(\lambda, \xi):= {1\over\vert\lambda\vert^{1-\alpha_0} 
 \Big(
n^4+(\vert\xi\vert+\vert\lambda\vert)^2\big(n^2+(\vert\xi\vert-\vert\lambda\vert)^2\big)\Big)} \,.
 \end{equation}
The remainder of the proof is devoted to show \eqref{e:lim-phi-n'}.  

In order to bound $K_n$ with a suitable (finite) quantity, let us fix a large constant $a>0$. Then we decompose $K_n$ as 
\begin{equation}\label{e:K-n-decom-0}
K_n= K_n(a) +\bar K_n(a),
\end{equation}
where $K_n(a)$ and $\bar K_n(a)$ are respectively defined by 
\begin{equation}\label{e:K-n-a}
K_n(a):=\int_{\R\times \{\vert\xi\vert\le a\}}\tilde \Phi_n(\lambda, \xi)\,\mu(d\xi)d\lambda,
\qquad
\bar K_n(a):=\int_{\R\times\{\vert\xi\vert> a\}} \tilde \Phi_n(\lambda, \xi)\,\mu(d\xi)d\lambda.
\end{equation}
We now bound those two terms separately. 

In order to estimate $K_n(a)$ in \eqref{e:K-n-decom-0}, we simply write
\[\tilde \Phi_n(\lambda, \xi) \le \frac1{|\lambda|^{1-\alpha_0} (n^4+ n^2 \lambda^2)},\]
which yields
\begin{equation*}
nK_n(a) \le \mu(|\xi|\le a) \int_\R \frac{d\lambda}{|\lambda|^{1-\alpha_0}n (n^2 +\lambda^2)}   \\
\le \frac{\mu(|\xi|\le a)}{n} \int_\R \frac{d\lambda}{|\lambda|^{1-\alpha_0} (1 +\lambda^2)}. 
\end{equation*}
Since $\alpha_0\in (0,1)$, the latter estimate readily entails that 
\begin{equation}\label{e:lim-n-K-n}
K_n(a)<\infty \quad \text{and}\quad \lim_{n\to\infty} n K_n (a) =0. 
\end{equation}

We now analyze the term $\bar K_n(a)$ in \eqref{e:K-n-a}. For this we decompose the integral and write
\begin{equation}\label{e:K-n-decom}
\bar K_n(a) = \bar K_n^1 (a) + \bar K_n^2 (a), 
\end{equation}
where $\bar K_n^1 (a), \bar K_n^2 (a)$  are given by 
\begin{equation*}
\bar K_n^1 (a):= \int_{\{ \vert\lambda\vert\le \vert\xi\vert/2,\,\vert\xi\vert> a\}}
\tilde \Phi_n(\lambda, \xi)\,\mu(d\xi)d\lambda,
\quad
\bar K_n^2 (a):=
\int_{\{ \vert\lambda\vert> \vert\xi\vert/2\}, \, \vert\xi\vert> a\}}
\tilde \Phi_n(\lambda, \xi)\,\mu(d\xi)d\lambda.
\end{equation*}
 Now we  estimate  $\bar K_n^1(a)$. If $|\lambda|\le \frac12|\xi|$ and recalling the expression~\eqref{b1} for $\tilde \Phi_n$, we have 
\[\tilde \Phi_n(\lambda, \xi) \le \frac1{|\lambda|^{1-\alpha_0} \big(n^4 +|\xi|^2 (n^2 + \frac14 |\xi|^2)\big)}.\]
Hence integrating first with respect to $\lambda$, we get 
\[\bar K_n^1 (a) \le \int_{\R^d}
\frac 1{n^4+\vert\xi\vert^2\big(n^2+\frac14\vert\xi\vert^2\big)} 
\left(\int_{-{1\over 2}\vert\xi\vert}^{{1\over 2}\vert\xi\vert}{1\over\vert\lambda\vert^{1-\alpha_0}}d\lambda\right)\mu(d\xi). \]
In addition, the elementary inequality $a^2+b^2 \ge 2 ab$  implies that $n^2 + \frac14 |\xi|^2 \ge \frac12 n |\xi|$. We thus get 
\[\bar K_n^1(a) \le \frac{C_{\alpha_0}}n \int_{\R^d} \frac{|\xi|^{\alpha_0}}{ n^3 +|\xi|^3} \mu(d\xi) . \]
Taking into account our assumption \eqref{e:condition}, we obtain that $\bar K_n^1(a)<\infty$ for every fixed $n\geq 1$, and a classical dominated convergence argument allows to conclude 
\begin{equation}\label{e:lim-n-K-1}
\lim_{n\to \infty} n \bar K_n^1(a) =0. 
\end{equation}

As far as $\bar K_n^2(a)$ in \eqref{e:K-n-decom} is concerned, on the set $|\lambda|> \frac12 |\xi|$, there exists a constant $C>0$ such that 
\begin{equation}\label{e:bound-tilde-phi}
\tilde \Phi_n(\lambda,\xi) \le \frac C{|\xi|^{3-\alpha_0} \big( n^2 + (|\xi|-|\lambda|)^2\big)}.
\end{equation}
Furthermore, one can argue that 
\begin{multline}\label{e:est-7}
\int_{\{|\lambda>|\xi|/2\}}{1\over n^2+(\vert\xi\vert-\vert\lambda\vert)^2} d\lambda\le 
\int_\R{1\over n^2+(\vert\xi\vert-\vert\lambda\vert)^2} d\lambda  \\
=2\int_0^\infty{1\over n^2+(\lambda-\vert\xi\vert)^2} d\lambda
\le 2\int_\R{1\over n^2+\lambda^2} d\lambda={2\pi\over n}.
\end{multline}
Plugging \eqref{e:bound-tilde-phi} and \eqref{e:est-7} into the definition of $\bar K_n^2(a)$, we thus get 
\begin{equation}\label{e:n-K-2}
n K_n^2(a) \le C_{\alpha_0} \int_{\{|\xi|>a\}} |\xi|^{-(3-\alpha_0)} \mu(d\xi), 
\end{equation}
which, due to assumption \eqref{e:condition}, already guarantees that $K_n^2(a)$ is finite for every fixed $n \geq 1$.

\smallskip

Summarizing our considerations so far, we report \eqref{e:lim-n-K-1} and \eqref{e:n-K-2} into the decomposition~\eqref{e:K-n-decom} of $\bar K_n(a)$. Taking also into account \eqref{e:lim-n-K-n} and the decomposition \eqref{e:K-n-decom-0}, we end up with 
\begin{equation}\label{e:lim-n-K-n'}
K_n < \infty \ \text{for every} \ n  \geq 1\quad \text{and} \quad \limsup_{n\to \infty} n K_n \le C_{\alpha_0}  \int_{\{|\xi|>a\}} |\xi|^{-(3-\alpha_0)} \mu(d\xi). 
\end{equation}
Eventually recall that we are working under \eqref{e:condition}. Moreover, the parameter $a$ in the right-hand side of \eqref{e:lim-n-K-n'} is arbitrarily large. Hence the right-hand side of \eqref{e:lim-n-K-n'} is arbitrarily small. We thus get 
\[\lim_{n\to\infty} nK_n =0,\]
which means that \eqref{e:lim-phi-n'} is shown. As argued in the beginning of the proof, this is sufficient to ensure that \eqref{e:lim-phi-n} holds true. 

 \noindent\textit{Case 2: $\alpha_0=0$.}  Noting that $\Phi_n(0, \xi)=(n^2+|\xi|^2)^2$, it suffices to prove
\begin{equation}\label{case2}
\int_{\R^d}\frac{1}{n^4+|\xi|^4}\mu(d\xi)<\infty \ \text{for every fixed} \ n \geq 1 \quad \text{and} \quad \lim_{n\to \infty} \int_{\R^d}\frac{n}{n^4+|\xi|^4}\mu(d\xi)=0. 
\end{equation}
Observe first that the condition \eqref{e:condition} with $\alpha_0=0$ becomes 
\begin{equation}\label{e:con-0}
  \int_{\R^d}{1\over 1+\vert\xi\vert^3}\mu(d\xi)<\infty,
\end{equation}
and so the fact that $\int_{\R^d}\frac{1}{n^4+|\xi|^4}\mu(d\xi)<\infty$ (for $n\geq 1$) is obvious. Besides, one has clearly 
\begin{align}\label{e:split}
\int_{\R^d}\frac{n}{n^4+|\xi|^4}\mu(d\xi)=\int_{\{|\xi|\le n\}} \frac{n}{n^4+|\xi|^4}\mu(d\xi)+\int_{\{|\xi|> n\}} \frac{n}{n^4+|\xi|^4}\mu(d\xi)
\end{align}
The second term of \eqref{e:split} satisfies
\[\int_{\{|\xi|> n\}} \frac{n}{n^4+|\xi|^4}\mu(d\xi)\le \int_{\{|\xi|>n\}} \frac{n}{n^4+n|\xi|^3}\mu(d\xi)\le \int_{\R^d} \frac{1}{n^3+|\xi|^3}\mu(d\xi)\]
and hence, by \eqref{e:con-0} this second term converges to 0 as $n\to \infty$.
Regarding the first term of~\eqref{e:split}, we have
\[\int_{\{|\xi|\le n\}} \frac{n}{n^4+|\xi|^4}\mu(d\xi) \le \frac1{n^3}\mu(\{|\xi|\le n\}).
\]
In addition, observe that 
\[ \frac1{n^3}\mu(\{|\xi|\le n\})\le 2\int_{\{|\xi|\le n\}} \frac{1}{n^3+|\xi|^3}\mu(d\xi)\le 2\int_{\R^d} \frac{1}{n^3+|\xi|^3}\mu(d\xi),
\]
and the right hand side above also goes to 0 as $n\to 0$ thanks to dominated convergence arguments, similarly to what we did for~\eqref{e:lim-n-K-1}.
This shows \eqref{case2} and our proof is complete. 
\end{proof}

Lemma \ref{lem:LD} was our last preliminary result before proving our existence and uniqueness theorem. We now state and prove this result, which has to be regarded as the main contribution of the current paper.

\begin{theorem}\label{thm:sufficiency}
Recall that the function $f_n$ is given by \eqref{e:fn}. Assume that Hypothesis \ref{H:noise} holds true and assume the same conditions as in Proposition~\ref{prop:condition-u0-u1}.  Then we have
\begin{equation}\label{e:chaos-series}
\sum_{n=0}^\infty n!\|f_n(\cdot,t,x)\|_{{\cal H}^{\otimes n}}^2<\infty .
\end{equation}
Hence owing to Proposition \ref{prop:chaos}, there is a unique mild Skorohod solution to \eqref{e:swe}. 
\end{theorem}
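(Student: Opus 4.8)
The plan is to prove the convergence of the series $\sum_{n=0}^\infty n!\|f_n(\cdot,t,x)\|_{\cH^{\otimes n}}^2$ by combining the Laplace transform bound from Proposition~\ref{prop:chaos-laplace} with the asymptotic decay of $L_{\alpha_0,n}$ furnished by Lemma~\ref{lem:LD}. The essential point is that bounding the series directly in the variable $t$ is awkward, whereas the Laplace transform $\Lambda_n(p)$ is already under control. First I would fix $t>0$ and $x\in\R^d$, and observe that by Proposition~\ref{prop:condition-u0-u1} we have $\|f_n(\cdot,t,x)\|_{\cH^{\otimes n}}^2 \le C_w^2 \|g_n(\cdot,t,x)\|_{\cH^{\otimes n}}^2$ where $C_w = \|w\|_{L^\infty([0,T]\times\R^d)}$, so it suffices to control the $g_n$-series.

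The key mechanism is to recover a pointwise-in-$t$ bound from the Laplace transform estimate \eqref{e:bound-laplace-p}, which I would read in the form $2\int_0^\infty e^{-2pt}\|g_n(\cdot,t,x)\|_{\cH^{\otimes n}}^2\,dt \le (C_{\alpha_0} L_{\alpha_0,n})^n$ after identifying the right-hand side of \eqref{e:bound-laplace-p} with $(C_{\alpha_0}L_{\alpha_0,n})^n$ upon setting $p=n$. Here the choice $p=n$ is crucial: it lets me exploit that $nL_{\alpha_0,n}\to 0$ from Lemma~\ref{lem:LD}. Since $t\mapsto \|g_n(\cdot,t,x)\|_{\cH^{\otimes n}}^2$ is non-decreasing (by positivity of $\gamma$ and $G$, exactly as used in Step~3 of the previous proof), I can bound its value at a fixed $t$ by removing the monotone factor: for any $p>0$,
\begin{equation*}
e^{-2pt}\|g_n(\cdot,t,x)\|_{\cH^{\otimes n}}^2 \le 2\int_t^\infty e^{-2ps}\|g_n(\cdot,s,x)\|_{\cH^{\otimes n}}^2\,ds \cdot \frac{1}{\text{(normalization)}},
\end{equation*}
or more cleanly, since the integrand is non-negative, $2\int_0^\infty e^{-2ps}\|g_n(\cdot,s,x)\|^2\,ds \ge 2\|g_n(\cdot,t,x)\|^2\int_t^\infty e^{-2ps}\,ds = \frac{1}{p}e^{-2pt}\|g_n(\cdot,t,x)\|^2$, using monotonicity to lower bound the integrand by its value at $t$ on $[t,\infty)$. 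This yields $\|g_n(\cdot,t,x)\|_{\cH^{\otimes n}}^2 \le p\, e^{2pt}(C_{\alpha_0}L_{\alpha_0,n})^n$, and taking $p=n$ gives $n!\,\|g_n\|^2 \le n!\, n\, e^{2nt}(C_{\alpha_0}L_{\alpha_0,n})^n$.

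The final step is to verify summability of $\sum_n n!\, n\, e^{2nt}(C_{\alpha_0}L_{\alpha_0,n})^n$. Using Stirling's approximation $n!\approx (n/e)^n\sqrt{2\pi n}$, the general term behaves like $\big(n\, C_{\alpha_0}L_{\alpha_0,n}\, e^{2t}/e \cdot(\text{subexponential})\big)^n$, so the root test applies: the $n$-th root is comparable to $n\cdot(C_{\alpha_0}L_{\alpha_0,n})\cdot e^{2t-1}$ up to polynomial factors, and since Lemma~\ref{lem:LD} gives $nL_{\alpha_0,n}\to 0$, the $n$-th root tends to $0$, forcing convergence for every $t>0$. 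I would carry this out carefully with the root test $\limsup_n (a_n)^{1/n}=\limsup_n (n! \, n)^{1/n}\,e^{2t}\,C_{\alpha_0}L_{\alpha_0,n} = e^{2t-1}\cdot C_{\alpha_0}\cdot\lim_n nL_{\alpha_0,n}=0$, where I use $(n!)^{1/n}\sim n/e$. The main obstacle is the precise bookkeeping of constants and the correct coupling between the Laplace parameter $p$ and the chaos order $n$; the insight that $p=n$ linearizes the factorial growth against the decaying $L_{\alpha_0,n}$ is what makes the whole argument work, and everything else is routine application of Proposition~\ref{prop:chaos-laplace}, Lemma~\ref{lem:LD}, and the root test.
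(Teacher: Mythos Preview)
Your proposal is correct and follows essentially the same approach as the paper: set $p=n$ in the Laplace bound of Proposition~\ref{prop:chaos-laplace}, use the monotonicity of $s\mapsto\|g_n(\cdot,s,x)\|_{\cH^{\otimes n}}^2$ to extract a pointwise bound $\|g_n(\cdot,t,x)\|^2\le n\,e^{2nt}(C_{\alpha_0}L_{\alpha_0,n})^n$, and then conclude via $nL_{\alpha_0,n}\to 0$ from Lemma~\ref{lem:LD}. The only cosmetic difference is that the paper replaces your root-test/Stirling computation by the cruder inequality $n!\le n^n$ and the observation that $\frac{1}{n}\log\big(n^n\|f_n(\cdot,t,x)\|^2\big)\to -\infty$; both arguments are equivalent in content.
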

\begin{proof}
We start by upper bounding the Laplace transform of the function $t\mapsto \|f_n(\cdot, t,x)\|^2_{\cH^{\otimes n}}$. To this aim, apply directly \eqref{e:bound-laplace-p} with $p=n$. We get 
\begin{align}\label{e:3.66}
2\int_0^\infty  e^{-2nt}\|f_n(\cdot,t,x)\|_{{\cal H}^{\otimes n}}^2 dt\le  \left( C_{\alpha_0}L_{\alpha_0, n} \right)^n,
\end{align}
where $L_{\alpha_0,n}$ is the (finite) quantity introduced in \eqref{defi:-j-n}.

\smallskip

According to Lemma \ref{lem:LD}, it holds that $\lim\limits_{n\to\infty} n L_{\alpha_0, n}=0$, and hence some elementary considerations yield 
\begin{align*}
\lim_{n\to\infty}\frac1n\log\big(n^n (C_{\alpha_0} L_{\alpha_0,n})^n\big) = \lim_{n\to\infty}\big(\log C_{\alpha_0}+\log(nL_{\alpha_0,n}) \big)=-\infty.
\end{align*}
This together with \eqref{e:3.66} lead to the following relation:
\begin{equation}\label{e:3.67}
\lim_{n\to\infty}{1\over n}\log n^n \left( 2
\int_0^\infty  e^{-2nt}\|f_n(\cdot,t,x)\|_{{\cal H}^{\otimes n}}^2 dt\right)=-\infty.
\end{equation}
This identity can be related to a single value of $\|f_n(\cdot, t,x)\|_{\cH^{\otimes n}}$ in the following way: for a given constant $t>0$, write
\begin{equation*}
2\int_0^\infty e^{-2ns}\|f_n(\cdot, s,x)\|_{{\cal H}^{\otimes n}}^2 ds 
\ge
2\int_t^\infty e^{-2ns}\|f_n(\cdot,s,x)\|_{{\cal H}^{\otimes n}}^2ds .
\end{equation*}
Moreover, going back to expression~\eqref{e:gn-norm} and taking into account the fact that  the kernels $G$ and $\gamma$ are positive, it is clear that $s\mapsto \|f_n(\cdot, s,x)\|_{{\cal H}^{\otimes n}}$ is nondecreasing. Therefore we get
\begin{equation}\label{e:3.68}
2\int_0^\infty e^{-2ns}\|f_n(\cdot, s,x)\|_{{\cal H}^{\otimes n}}^2 ds 
\ge 
\frac1{n}e^{-2nt} \|f_n(\cdot,t,x)\|_{{\cal H}^{\otimes n}}^2. 
\end{equation}
Combining \eqref{e:3.67} and \eqref{e:3.68}, we thus get
\begin{equation}\label{e:3.69}
\lim_{n\to\infty}{1\over n}\log n^{n}\|f_n(\cdot,t,x)\|_{{\cal H}^{\otimes n}}^2=-\infty.
\end{equation}

We are now ready to conclude the proof of \eqref{e:chaos-series}.  Indeed, since $n!\le n^n$ for all $n\ge 0$ (with the convention $0^0=1$), we have 
\begin{equation}\label{e:3.70}
\sum_{n=0}^\infty n! \|f_n(\cdot, t,x)\|^2_{\cH^{\otimes n}} \le \sum_{n=0}^\infty n^n \|f_n(\cdot, t,x)\|^2_{\cH^{\otimes n}} .
\end{equation}
Moreover, relation \eqref{e:3.69}  asserts the existence of $n_0\ge 1$ such that for $n\ge n_0$ we have $n^n \|f_n(\cdot, t,x)\|_{\cH^{\otimes n}}\le e^{-n}$. Plugging this information into \eqref{e:3.70}, we trivially get \eqref{e:chaos-series}. This finishes our proof. 
\end{proof}

\begin{remark}[] For the special case
stated in Corollary \ref{cor:main-thm}, we can prove Theorem \ref{thm:sufficiency} by  just gathering Proposition~\ref{prop:chaos-laplace} and the following  scaling property valid for the functions $g_n$ defined by~\eqref{e:gn}:
\begin{equation}\label{e:scaling}
\|g_{n}(\cdot,t,x)\|_{{\cal H}^{\otimes n}}^2=t^{(4-\alpha-\alpha_0)n}
\|g_{n}(\cdot,1,x)\|_{{\cal H}^{\otimes n}}^2.
\end{equation}
 In particular, we do not need to invoke the behavior of $L_{\alpha_0,n}$ as $n\to\infty$. More specifically, let us recall that \eqref{e:bound-laplace-p} is also valid for the functions $g_{n}(\cdot, t,x)$ defined by \eqref{e:gn}. Hence
\begin{equation}\label{e:3.72}
\int_0^\infty e^{-2t}\|g_{n}(\cdot, t,x)\|^2_{\cH^{\otimes n}}dt \le \big(C_{\alpha_0} L_{\alpha_0,1} \big)^n,
\end{equation}
where $L_{\alpha_0,1}<\infty$. Moreover, owing to \eqref{e:scaling}, the left-hand side of \eqref{e:3.72} can be recast as 
\begin{equation}\label{e:3.73}
\|g_{n}(\cdot, 1, x)\|_{\cH^{\otimes n}}^2 \int_0^\infty e^{-2t} t^{(4-\alpha-\alpha_0)n} dt.
\end{equation}
Let us also recall from Remark \ref{cor:main-thm} that in the homogeneous case our condition \eqref{e:condition} reads 
$\alpha_0+\alpha<3$. Thus gathering \eqref{e:3.72} and \eqref{e:3.73}, and reporting to elementary properties of Gamma functions, we end up with 
\[\|g_{n}(\cdot, 1,x)\|^2_{\cH^{\otimes n}} \le \frac{C^n}{(n!)^{4-\alpha-\alpha_0} },\]
for a constant $C>0.$ We can now invoke \eqref{e:scaling}  again in order to get 
\[\|g_{n}(\cdot, t,x)\|^2_{\cH^{\otimes n}} \le \frac{(C t^{4-\alpha-\alpha_0})^n}{(n!)^{4-\alpha-\alpha_0} }.\]
Since $4-\alpha-\alpha_0>1$, this is enough to ensure \eqref{e:chaos-series}.
\end{remark}

\subsection{Proof of Corollary \ref{cor:main-thm}}\label{subsec:proof-coro}

It is readily checked that the Fourier transform of a $\alpha$-homogeneous measure is homogeneous of order $d-\alpha$. In other words, condition \eqref{intro-alpha} can be easily recast as follows: for all bounded function $\varphi:\R^d\to\R$ with compact support and all $c>0$,
\begin{equation*}
\int_{\R^d} \varphi(cx)\, \mu(dx)=c^{-\alpha}\int_{\R^d}\varphi(x)\, \mu(dx)\, .
\end{equation*}
Applying this formula with $\varphi(\xi):=\mathbf{1}_{\{|\xi|\leq 1\}}$ and $c=r^{-1}$, we obtain that 
\begin{equation}\label{homo-alpha}
\mu(\mathcal{B}(0,r))=r^\alpha\mu(\mathcal{B}(0,1)) \quad \text{for all} \ r>0,
\end{equation} 
where $\mathcal{B}(0,r):=\{\xi\in \R^d: \ |\xi|\leq 1\}$.

\smallskip

 With relation \eqref{homo-alpha} in mind, applying \eqref{e:spherical} in Lemma   \ref{lem:spherical} below with $\nu=\mu$ and $f(r) = (1+r^2)^{(\alpha_0-3)/2}$ 
will enable us to establish the following identity: 
\begin{equation}\label{identit:delta-R}
\int_{\R^d} \bigg(\frac{1}{1+|\xi|^2}\bigg)^{\frac{3-\alpha_0}{2}}\mu(d\xi)=\alpha\, \mu(\mathcal{B}(0,1)) \int_0^\infty \frac{dr}{r^{1-\alpha}}  \bigg(\frac{1}{1+r^2}\bigg)^{\frac{3-\alpha_0}{2}} \, ,
\end{equation}
from which we immediately derive the conclusion of Corollary \ref{cor:main-thm} (recall that $\alpha>0$).

\smallskip

\begin{lemma}\label{lem:spherical}
Let $\nu$ be a Randon measure on $\R^d$ and denote $g(r)=\nu(\mathcal B(0,r))$ for $r\ge0.$
Then for any continuous function $f:[0, \infty)\to [0, \infty)$, we have 
\begin{equation}\label{e:spherical}
\int_{\R^d}f(|\xi|) \nu(d\xi)=\int_0^\infty f(r) dg(r). 
\end{equation}
\end{lemma}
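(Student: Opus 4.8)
The plan is to recognize the right-hand side of \eqref{e:spherical} as an integral against the pushforward of $\nu$ under the radial map, and then to identify this pushforward with the Lebesgue--Stieltjes measure $dg$. First I would introduce the continuous map $R:\R^d\to[0,\infty)$, $R(\xi)=|\xi|$, and define the image measure $\nu_R:=R_\ast\nu$ on $[0,\infty)$ by $\nu_R(A)=\nu(R^{-1}(A))=\nu(\{\xi:\,|\xi|\in A\})$ for every Borel set $A\subseteq[0,\infty)$. Since $\nu$ is a Radon measure and $R$ is continuous, $\nu_R$ is a well-defined (Radon) measure on $[0,\infty)$. The abstract change-of-variables formula for image measures then gives, for every non-negative Borel function $f$ and in particular for the continuous $f$ of the statement,
\[
\int_{\R^d} f(|\xi|)\,\nu(d\xi)=\int_{\R^d}(f\circ R)(\xi)\,\nu(d\xi)=\int_0^\infty f(r)\,\nu_R(dr).
\]

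It then remains to check that $\nu_R$ coincides with the Lebesgue--Stieltjes measure $dg$ generated by $g(r)=\nu(\mathcal B(0,r))$. By the very definition of the closed ball $\mathcal B(0,r)=\{\xi:|\xi|\le r\}$, we have $\nu_R([0,r])=\nu(\{\xi:|\xi|\le r\})=g(r)$, so that $g$ is precisely the cumulative distribution function of $\nu_R$; consequently $\nu_R((a,b])=g(b)-g(a)$ for all $0\le a\le b$. As $g$ is non-decreasing and right-continuous (the latter because $\mathcal B(0,r)$ is closed and $\nu$ is countably additive), the measures $\nu_R$ and $dg$ agree on the $\pi$-system of half-open intervals $(a,b]$, hence on all Borel subsets of $[0,\infty)$ by uniqueness of measure extension. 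Substituting $\nu_R=dg$ into the previous display yields \eqref{e:spherical}.

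The only genuinely delicate point is the measure-theoretic bookkeeping in this last step: one must fix the convention for the ball (closed versus open) so that $g$ is right-continuous and matches the normalization of the Lebesgue--Stieltjes measure, and one must track a possible atom of $\nu$ at the origin, for which $\nu_R(\{0\})=\nu(\{0\})=g(0)$ should be consistent with the chosen convention at the lower endpoint of $\int_0^\infty$. Everything else is a standard change of variables. A purely hands-on alternative, avoiding image measures altogether, would be to verify \eqref{e:spherical} first for $f=\1_{[0,r]}$ (where it reduces to the definition of $g$), extend by linearity to non-negative step functions, and finally pass to a general continuous non-negative $f$ by monotone approximation from below; this route simply trades the abstract uniqueness argument for a routine monotone convergence argument.
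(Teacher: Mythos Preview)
Your argument is correct, but it follows a genuinely different route from the paper's. The paper interprets $\int_0^\infty f(r)\,dg(r)$ as a Riemann--Stieltjes integral and proves the identity directly by partition sums: for a fixed $R>0$ it partitions $[0,R]$ as $0=r_0<\cdots<r_n=R$, decomposes $\mathcal B(0,R)$ into the annuli $E_k=\mathcal B(0,r_k)\setminus\mathcal B(0,r_{k-1})$, applies the mean-value theorem for integrals on each shell to write $\int_{E_k}f(|\xi|)\,\nu(d\xi)=f(r_k^*)[g(r_k)-g(r_{k-1})]$, and lets the mesh tend to zero. By contrast, you interpret the right-hand side as a Lebesgue--Stieltjes integral, invoke the abstract change-of-variables formula for the pushforward $\nu_R=|\cdot|_*\nu$, and identify $\nu_R=dg$ via a $\pi$-system argument. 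Your approach is cleaner from a measure-theoretic standpoint and handles the delicate bookkeeping (right-continuity of $g$, atoms of $\nu$) more explicitly; the paper's approach is more elementary in that it avoids image measures and uniqueness-of-extension, relying only on continuity of $f$ and the definition of the Riemann--Stieltjes integral. The ``hands-on alternative'' you sketch at the end (indicators, linearity, monotone approximation) is yet a third variant, distinct from the partition-sum argument actually used in the paper.
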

\begin{proof}
It suffices to prove the following equality for all $R>0$:
\begin{equation}\label{e:spherical-R}
\int_{\mathcal B(0,R)}f(|\xi|) \nu(d\xi)=\int_0^R f(r) dg(r),
\end{equation}
where the integral on the right-hand side is a well-defined  Riemann-Stieltjes integral noting that $f$ is continuous and  $g$ is increasing.

For a fixed positive number $R$, let $0=r_0<r_1<\dots<r_n=R$ be a partition of the interval $[0,R]$. Denoting $E_k=\mathcal B(0, r_{k})\backslash \mathcal B(0, r_{k-1})$ for $k=1, \dots, n$, clearly we have
\[\int_{\mathcal B(0,R)}f(|\xi|) \nu(d\xi)=\sum_{k=1}^n \int_{E_k} f(|\xi|) \nu(d\xi).\]
By the continuity of $f$, we have that for each $k$, \[\int_{E_k} f(|\xi|) \nu(d\xi) = f(r_k^*) \nu(E_k) = f(r_k^*) [g(r_{k}) -g(r_{k-1})],\] for some $r_k^*\in[r_{k-1}, r_k]$.  Thus, we have 
\[\int_{\mathcal B(0,R)}f(|\xi|) \nu(d\xi)=\sum_{i=1}^n f(r_k^*) [g(r_{k}) -g(r_{k-1})],\]
and letting $n\to \infty$ yields the desired \eqref{e:spherical-R}.
\end{proof}

\smallskip

\paragraph{\textbf{Acknowledgment.}} X. Chen is partially supported by the Simons foundation grant 585506. J. Song is partially supported by Shandong University grant 111400\-89963041 and National Natural Science Foundation of China grant 12071256. S. Tindel is partially supported by the NSF grant  DMS-1952966.

\bigskip

\bibliographystyle{plain}
\bibliography{Reference.bib}

\end{document}